\newcommand{\A}{{\mathcal{A}}}
\newcommand{\C}{{\mathcal{C}}}
\newcommand{\F}{{\mathcal{F}}}
\newcommand{\G}{{\mathrm{G}}}
\newcommand{\Hom}{\mathrm{Hom}}
\newcommand{\End}{\mathrm{End}}
\newcommand{\col}{{\rm colim}\,}
\newcommand{\Md}{\text{-}\mathbf{Mod}}
\newcommand{\FF}{\mathbb{F}}
\newcommand{\GL}{\operatorname{GL}}
\newcommand{\op}{\mathrm{op}}
\newcommand{\lgr}{\ell}
\newcommand{\surj}{\operatorname{Surj}}
\newcommand{\zcyc}{\mathbb{Z}^{\operatorname{cycl}}}
\newcommand{\tr}{{\operatorname{tr}}}
\newcommand{\tor}{{\operatorname{tor}}}
\title{Note sur la linéarisation des groupes abéliens finis}
\author{Aur\'elien DJAMENT\thanks{CNRS, laboratoire Analyse, Géométrie et Applications (UMR7539), Institut Galilée, 99 avenue Jean-Baptiste Clément
93430 VILLETANEUSE,
FRANCE, djament@math.cnrs.fr.}}
\newtheorem{thm}{Th\'eor\`eme}[subsection]
\newtheorem{pr}[thm]{Proposition}
\newtheorem{cor}[thm]{Corollaire}
\newtheorem{lm}[thm]{Lemme}
\theoremstyle{definition}
\newtheorem{defi}[thm]{D\'efinition}
\newtheorem{nota}[thm]{Notation}
\newtheorem{hyp}[thm]{Hypoth\`ese}
\theoremstyle{remark}
\newtheorem{rem}[thm]{Remarque}
\begin{document}

\maketitle

\begin{abstract}
Si $K$ est un corps commutatif \textit{possédant assez de racines de l'unité} et $V$ un groupe abélien fini, la $K$-algèbre $K[V]$ du groupe $V$ est semi-simple déployée, de sorte que le morphisme canonique $K[V]\to K^{V^\sharp}$, où $V^\sharp$ désigne le groupe dual de $V$ (qu'on peut voir comme $\Hom(V,K^\times)$), est un isomorphisme de $K$-algèbres. Si l'on supprime l'hypothèse que $K$ contient assez de racines de l'unité, on peut en déduire facilement (en utilisant une extension des scalaires et Krull-Schmidt) qu'il demeure un isomorphisme, cette fois-ci seulement $K$-linéaire, $K[V]\xrightarrow{\simeq} K^{V^\sharp}$ naturel en le groupe $V$ \textit{si l'on impose à l'avance que $V$ est annulé par un entier non nul fixé} \cite[prop.~3.3]{DG}. La question de savoir s'il existe un tel isomorphisme naturel en le groupe abélien $V$, sans d'autre restriction que $V$ fini et d'ordre inversible dans $K$, est moins évidente ; nous la résolvons ici positivement, dans un cas un peu plus général ($K$ étant un anneau commutatif quelconque), en utilisant des sommes de Gau\ss. Nous explorons également quelques questions fonctorielles reliées.
\end{abstract}

{\selectlanguage{english}
{\begin{abstract}
If $K$ is a field \textit{with enough roots of unity} and $V$ an abelian group, the $K$-algebra $K[V]$ of the group $V$ is split semisimple, so that the canonical morphism $K[V]\to K^{V^\sharp}$, where $V^\sharp$ denotes the dual group of $V$ (which may be seen as $\Hom(V,K^\times)$), is an isomorphism of $K$-algebras. If one removes the assumption that $K$ has enough roots of unity, one can easily deduce from it (by using base change and Krull-Schmidt) that it remains a $K$-linear isomorphism  $K[V]\xrightarrow{\simeq} K^{V^\sharp}$ natural in the group $V$ \textit{if one restricts to finite groups $V$ cancelled by a fixed nonzero integer} \cite[prop.~3.3]{DG}. The question of whether such an isomorphism, natural in the abelian group $V$, still exists without any other restriction than $V$ is finite and its order is invertible in $K$, is less obvious; we solve it positively, in a somewhat more general setting ($K$ being any commutative ring), by using Gau\ss\ sums. We also explore some related functorial questions.
\end{abstract}
}}

\bigskip

\noindent
{\em Mots-clefs : } catégories de foncteurs, catégories additives, linéarisation, inversion de Fourier, groupes abéliens finis, dualité, sommes de Gau\ss.

\medskip

\noindent
{\em Classification MSC 2020 : } 18A25, 18E05, 20K01 ; 11L05.


\bigskip

Dans tout ce texte, la lettre $k$ désigne un anneau commutatif non nul. On note $k^\times$ le groupe des éléments inversibles de $k$.

La lettre  $p$ désigne un nombre premier. On note $\mathbb{Z}/p^\infty$ le groupe abélien divisible $\underset{n\in\mathbb{N}}{\col}\mathbb{Z}/p^n$.

Si $E$ est un ensemble, on note $|E|$ son cardinal, et $k[E]$ le $k$-module libre sur $E$ ; on définit ainsi un foncteur $k[-]$ de la catégorie $\mathbf{Ens}$ des ensembles vers la catégorie $k\Md$ des $k$-modules. On note également $k^E$ le $k$-module des fonctions ensemblistes de $E$ vers $k$ ; on obtient ainsi un foncteur $k^{(-)} : \mathbf{Ens}^{\op}\to k\Md$. Le $k$-module $k^E$ possède également une structure naturelle de $k$-algèbre (produit de copies de $k$).

Si $M$ est un monoïde, le $k$-module $k[M]$ est muni de la structure de $k$-algèbre induite par la loi de $M$ (algèbre de monoïde).

\section{Rappels arithmétiques sur les groupes abéliens finis}

\subsection{Algèbres de groupe diagonalisables}

\subsubsection{Algèbres diagonalisables}
\begin{defi} On appelle $k$-algèbre \textbf{faiblement diagonalisable} toute $k$-algèbre isomorphe à l'algèbre produit $k^E$ pour un ensemble $E$. Si l'on peut choisir $E$ fini, on parle de $k$-algèbre \textbf{fortement diagonalisable}.
\end{defi}

\begin{rem}
Une $k$-algèbre faiblement diagonalisable qui est un $k$-module de type fini est fortement diagonalisable (on peut réduire modulo un idéal maximal $\mathfrak{m}$ de $k$ pour voir que l'espace vectoriel $(k/\mathfrak{m})^E$ sur $k/\mathfrak{m}$ est de dimension finie, ce qui entraîne que $E$ est fini).
\end{rem}

\begin{lm}\label{idem-stan} Soit $X$ un ensemble fini d'éléments idempotents d'un anneau commutatif $A$. Il existe un ensemble complet d'idempotents $T$ de $A$ (i.e. un ensemble d'idempotents non nuls de $A$ tel que $e.e'=0$ pour $e, e\in E'$ distincts et que $\sum_{e\in T}e=1$) tel que tout élément de $X$ soit somme d'éléments deux à deux distincts de $T$.
\end{lm}

\begin{proof} Ce résultat élémentaire constitue une conséquence classique de l'inversion de Möbius, voir par exemple \cite[th.~3.9.2]{Stan}.
\end{proof}

\begin{lm}\label{lm-idem_diag} Soit $A$ une $k$-algèbre. On suppose que $A$ est un $k$-module libre et qu'il existe des idempotents $e_i$ ($1\le i\le n$) de $k$ tels que $A\simeq\prod_{i=1}^n e_i.k$ comme $k$-algèbre. Alors $A$ est une $k$-algèbre fortement diagonalisable.
\end{lm}

\begin{proof} Soit $r$ le rang du $k$-module libre $A$. Soit par ailleurs $T$ un ensemble complet d'idempotents associé à l'ensemble $\{e_1,\dots,e_n\}$ comme dans le lemme~\ref{idem-stan}. Comme $\Big(\sum_{j=1}^m\epsilon_j\Big).k\simeq\prod_{j=1}^m (\epsilon_j.k)$ comme $k$-algèbres lorsque les $\epsilon_j$ sont des idempotents tels que $\epsilon_j.\epsilon_t=0$ pour $j\ne t$, on voit qu'il existe une fonction $\nu : T\to\mathbb{N}$ telle que $A$ soit isomorphe à l'algèbre $\prod_{\epsilon\in T}(\epsilon.k)^{\nu(\epsilon)}$. Cela implique notamment $\epsilon.A\simeq (\epsilon.k)^{\nu(\epsilon)}$ comme $k$-algèbres pour tout $\epsilon\in T$. En particulier, $\epsilon.A$ est un $\epsilon.k$-module libre de rang $\nu(\epsilon)$, d'où $\nu(\epsilon)=r$ pour tout $\epsilon\in T$. Par conséquent, on dispose d'un isomorphisme de $k$-algèbres $A\simeq\prod_{\epsilon\in T}(\epsilon.k)^r\simeq k^r$, puisque $T$ est un ensemble complet d'idempotents de $k$, d'où le lemme.
\end{proof}

\begin{pr}\label{pr-fcfd} Soient $L$ une $k$-algèbre fortement diagonalisable et $A$ une algèbre quotient de $A$. On suppose que la projection $L\twoheadrightarrow A$ est scindée comme morphisme de $L$-modules, et que $A$ est un $k$-module libre. Alors $A$ est une $k$-algèbre fortement diagonalisable.
\end{pr}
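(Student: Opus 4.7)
The strategy is to reduce the proposition to Lemma~\ref{lm-idem_diag}. More precisely, I plan to show, starting from the hypotheses, that $A$ is isomorphic as a $k$-algebra to a finite product $\prod_{i=1}^n e_i k$ for some idempotents $e_i$ of $k$; combined with the freeness of $A$ as a $k$-module, Lemma~\ref{lm-idem_diag} will then deliver the conclusion.

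The first step is to extract from the splitting a single idempotent of $L$. Fix an $L$-linear section $s : A \to L$ of the surjection $\pi : L \twoheadrightarrow A$ (where $A$ is regarded as an $L$-module through $\pi$), and set $e := s(1_A) \in L$. Exploiting the $L$-linearity of $s$ together with $\pi(e)=1_A$, a short computation shows that $e$ is an idempotent of $L$, that $L = eL \oplus (1-e)L$, and that $\ker\pi = (1-e)L$. Consequently $A \cong L/\ker\pi \cong eL$ as $k$-algebras, the unit of $eL$ being $e$.

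The second step uses the hypothesis that $L$ is strongly diagonalizable. Fixing an isomorphism $L \cong k^n$ and writing $e = (e_1,\dots,e_n)$ componentwise, each $e_i$ is necessarily an idempotent of $k$, and one gets a $k$-algebra isomorphism $A \cong eL \cong \prod_{i=1}^n e_i k$. This is exactly the hypothesis of Lemma~\ref{lm-idem_diag}, which, combined with the assumption that $A$ is a free $k$-module, concludes the proof.

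I do not foresee any serious obstacle: once the splitting is translated into an idempotent --- a standard move for ideals in a commutative ring that are module direct summands --- the rest is forced by the structure of idempotents in a product algebra. The freeness hypothesis on $A$ intervenes only in the last step: without it, the decomposition $\prod_i e_i k$ need not be isomorphic to any $k^m$, and Lemma~\ref{lm-idem_diag} would not apply.
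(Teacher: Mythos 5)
Your proof is correct and follows essentially the same route as the paper: translate the $L$-module splitting into an idempotent $e\in L$ with $A\simeq eL$, identify $e$ with an $n$-tuple of idempotents of $k$ via $L\simeq k^n$, and conclude by Lemma~\ref{lm-idem_diag} using the freeness of $A$. You merely spell out in more detail the step the paper takes for granted (that the split surjection realizes $A$ as the image of an idempotent of $L$).
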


\begin{proof} L'algèbre $A$ est par hypothèse l'image d'un idempotent de la $k$-algèbre $L$. Or  $L\simeq k^n$ pour un certain $n\in\mathbb{N}$, et les idempotents de $k^n$ sont les $n$-uplets d'éléments idempotents de $k$. Ainsi, la proposition est une reformulation du lemme~\ref{lm-idem_diag}.
\end{proof}

Le résultat suivant est immédiat.
\begin{lm}\label{lm-ptd} Le produit tensoriel de deux $k$-algèbres fortement diagonalisables est fortement diagonalisable.
\end{lm}

\subsubsection{Éléments étrangers}

\begin{defi} Deux éléments d'un anneau commutatif $A$ sont dits \textbf{étrangers} si l'idéal qu'ils engendrent égale $A$.
\end{defi}

Les cinq énoncés suivants, très simples et classiques, sont laissés en exercice.

\begin{lm}\label{lm-chin} Deux éléments $a, b$ d'un anneau commutatif $A$ sont étrangers si et seulement si le morphisme d'anneaux canonique $A/(ab)\to A/(a)\times A/(b)$ est un isomorphisme.
\end{lm}

\begin{lm}\label{lmev-g} Si chacun des éléments d'une famille finie d'éléments deux à deux étrangers d'un anneau commutatif $A$ divise un élément $x$ de $A$, alors le produit des éléments de cette famille divise également $x$.
\end{lm}

\begin{lm}\label{lm-etr_prod} Soient $a, b$ et $x$ des éléments d'un anneau commutatif. Si $a$ et $x$ sont étrangers et que $b$ et $x$ sont étrangers, alors $ab$ et $x$ sont étrangers.
\end{lm}

\begin{lm}\label{lm-proder} Soient $R$ et $S$ des éléments de l'algèbre de polynômes $k[X]$ ; posons $P:=RS$. Les assertions suivantes sont équivalentes :
\begin{enumerate}
\item  $P$ et son polynôme dérivé $P'$ sont étrangers ;
\item $R$ et $S$ sont étrangers, $R$ et $R'$ sont étrangers, et $S$ et $S'$ sont étrangers.
\end{enumerate}
\end{lm}

\begin{lm}\label{lm-diffev} Soient $a$ et $b$ des éléments de $k$. Les polynômes $X-a$ et $X-b$ de $k[X]$ sont étrangers si et seulement si $a-b\in k^\times$.
\end{lm}

\begin{lm}\label{quot-diag} Soit $P$ un polynôme unitaire de $k[X]$. Les assertions suivantes sont équivalentes :
\begin{enumerate}
\item\label{qd1} la $k$-algèbre $k[X]/(P)$ est fortement diagonalisable ;
\item\label{qd2} $P$ se décompose en produit de polynômes unitaires de degré $1$ deux à deux étrangers ;
\item\label{qd3} $P$ se décompose en produit de polynômes unitaires de degré $1$ et est étranger à son polynôme dérivé.
\end{enumerate}
\end{lm}

\begin{proof} Soit $n$ le degré de $P$ : l'algèbre $A:=k[X]/(P)$ est fortement diagonalisable si et seulement si $A\simeq k^n$. Un morphisme d'algèbres $A\to k^n$ est induit par l'évaluation des polynômes en des éléments $a_1,\dots,a_n$ de $k$ annulant $P$. Un tel morphisme est un isomorphisme si et seulement si l'image de la base du $k$-module $A$ constituée des classes modulo $(P)$ des $X^i$ ($0\le i<n$), à savoir des $(a_j^i)_{1\le j\le n}$, est une base de $k^n$, autrement dit, si et seulement si le déterminant de la matrice correspondante est inversible. Ce déterminant de Vandermonde est inversible si et seulement si $a_i-a_j\in k^\times$ pour $i\ne j$. Compte-tenu des lemmes~\ref{lm-diffev} et~\ref{lmev-g}, cela établit l'équivalence entre \ref{qd1} et \ref{qd2}.

L'équivalence entre \ref{qd2} et \ref{qd3} résulte pour sa part d'une application itérée du lemme~\ref{lm-proder}.
\end{proof}

\begin{lm}\label{lm-dpolprod} Soient  $S$ et $T$ des polynômes unitaires de $k[X]$. Si la $k$-algèbre $k[X]/(ST)$ est fortement diagonalisable, alors il en est de même pour $k[X]/(S)$.
\end{lm}

\begin{proof} Supposons $k[X]/(ST)$ fortement diagonalisable. Les lemmes~\ref{quot-diag} et \ref{lm-proder} montrent que $S$ et $T$ sont étrangers, d'où un isomorphisme d'algèbres $k[X]/(ST)\simeq k[X]/(S)\times k[X]/(T)$ par le lemme~\ref{lm-chin}. Ainsi, la $k$-algèbre $k[X]/(S)$, qui est un module libre sur $k$ car $S$ est un polynôme unitaire, est un quotient de $k[X]/(ST)$ et l'application canonique $k[X]/(ST)\twoheadrightarrow k[X]/(S)$ est scindée comme morphisme de $k[X]/(ST)$-modules. La proposition~\ref{pr-fcfd} donne alors la conclusion.
\end{proof}

\begin{lm}\label{lm-inv_cycl} Soient $d>1$ un entier et $\alpha\in k$. On suppose que $d$ est inversible dans $k$ et que $\alpha$ annule le $d$-ième polynôme cyclotomique $\Phi_d$. Alors $\alpha-1\in k^\times$.
\end{lm}

\begin{proof} Comme $d$ est inversible dans $k$, $X^d-1\in k[X]$ est étranger à son polynôme dérivé $dX^{d-1}$. Du fait que $X^d-1$ est divisible par $(X-1)\Phi_d$ (car $d>1$), et donc par $(X-1)(X-\alpha)$, le lemme~\ref{lm-proder} permet d'en déduire que $X-1$ et $X-\alpha$ sont étrangers, de sorte que le lemme~\ref{lm-diffev} achève la démonstration.
\end{proof}

\subsubsection{Diagonalisabilité de l'algèbre d'un groupe}\label{par-diagrp}

On commence par examiner la question de la diagonalisabilité de l'algèbre d'un groupe cyclique fini :

\begin{pr}\label{pr-diag_Zn} Soit $n\in\mathbb{N}^*$. Les assertions suivantes sont équivalentes :
\begin{enumerate}
\item\label{itcy1} l'algèbre $k[\mathbb{Z}/n]$ du groupe cyclique $\mathbb{Z}/n$ est fortement diagonalisable ;
\item\label{itcy2} le polynôme $X^n-1$ de $k[X]$ se décompose en produit de polynômes unitaires de degré $1$ deux à deux étrangers ;
\item\label{itcy3} $n$ est inversible dans $k$ et le $n$-ième polynôme cyclotomique $\Phi_n$ possède une racine dans $k$.
\end{enumerate}
\end{pr}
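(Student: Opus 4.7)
Le plan est d'établir d'abord l'équivalence \ref{itcy1}$\Leftrightarrow$\ref{itcy2}, qui est immédiate en appliquant le lemme~\ref{quot-diag} au polynôme $P = X^n - 1$, via l'isomorphisme canonique $k[\mathbb{Z}/n]\simeq k[X]/(X^n-1)$.

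Pour \ref{itcy2}$\Rightarrow$\ref{itcy3}, on part d'une décomposition $X^n - 1 = \prod_{i=1}^n (X - a_i)$ avec les facteurs $X - a_i$ deux à deux étrangers. L'évaluation en $1$ fournit $0 = \prod_i(1-a_i)$ : comme un produit d'éléments inversibles est inversible, l'un des $1-a_i$ doit s'annuler, disons $a_1 = 1$. En dérivant la relation et en évaluant en $1$, on obtient $n = \prod_{i=2}^n(1 - a_i)$ ; c'est un produit d'éléments inversibles par le lemme~\ref{lm-diffev}, d'où $n\in k^\times$. Pour l'existence d'une racine de $\Phi_n$ dans $k$, on observe que $\Phi_n$ divise $X^n-1$ dans $\mathbb{Z}[X]$, donc dans $k[X]$, avec quotient unitaire ; le lemme~\ref{lm-dpolprod} entraîne alors que $k[X]/(\Phi_n)$ est fortement diagonalisable, et le lemme~\ref{quot-diag} livre une décomposition de $\Phi_n$ en facteurs unitaires de degré $1$, d'où une racine dans $k$.

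L'implication \ref{itcy3}$\Rightarrow$\ref{itcy2} constitue le cœur de la démonstration. Soit $\alpha\in k$ avec $\Phi_n(\alpha)=0$ ; comme $\Phi_n\mid X^n-1$, on a $\alpha^n=1$, donc $\alpha\in k^\times$. L'idée est d'exhiber la décomposition $X^n - 1 = \prod_{i=0}^{n-1}(X - \alpha^i)$ à facteurs deux à deux étrangers. Compte tenu du lemme~\ref{lm-diffev}, il faut établir que $\alpha^i-\alpha^j\in k^\times$ pour $0\le i<j\le n-1$, ce qui équivaut, $\alpha$ étant inversible, à l'inversibilité de $\alpha^d-1$ pour $1\le d\le n-1$. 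Une fois ce fait acquis, un argument de factorisation itéré (on factorise successivement $X - \alpha^i$ à partir de $X^n-1$ en utilisant à chaque étape l'inversibilité des différences pour assurer la nullité aux puissances suivantes, et la comparaison des degrés pour clore) permet de conclure.

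L'obstacle principal consiste donc à montrer l'inversibilité de $\alpha^d - 1$ pour $1\le d\le n-1$. J'utiliserai pour cela la divisibilité polynomiale $\Phi_n(X)\mid\Phi_m(X^d)$ dans $\mathbb{Z}[X]$, où $m:=n/\gcd(n,d)>1$, que l'on vérifie en comparant les racines complexes (élever à la puissance $d$ transforme une racine primitive $n$-ième en une racine primitive $m$-ième de l'unité) puis par un argument classique de Gauss pour les polynômes unitaires. Cette divisibilité, spécialisée dans $k[X]$, fournit $\Phi_m(\alpha^d)=0$. Comme $m\mid n$ et que $n$ est inversible dans $k$, l'entier $m$ est lui aussi inversible dans $k$ ; le lemme~\ref{lm-inv_cycl} entraîne alors $\alpha^d - 1\in k^\times$, ce qui achève la démonstration.
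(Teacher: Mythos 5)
Votre démonstration suit pour l'essentiel la même stratégie que celle du texte : l'équivalence \ref{itcy1}$\Leftrightarrow$\ref{itcy2} via le lemme~\ref{quot-diag}, l'existence d'une racine de $\Phi_n$ via le lemme~\ref{lm-dpolprod}, et l'implication \ref{itcy3}$\Rightarrow$\ref{itcy2} par la décomposition $X^n-1=\prod_{i}(X-\alpha^i)$, l'inversibilité des $\alpha^d-1$ reposant sur $\Phi_{n/\mathrm{pgcd}(n,d)}(\alpha^d)=0$ et le lemme~\ref{lm-inv_cycl}. Vous explicitez d'ailleurs la divisibilité $\Phi_n(X)\mid\Phi_{n/\mathrm{pgcd}(n,d)}(X^d)$ dans $\mathbb{Z}[X]$, que le texte se contente d'affirmer ; c'est un plus.

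Il y a en revanche une vraie faille dans votre preuve de l'inversibilité de $n$ (implication \ref{itcy2}$\Rightarrow$\ref{itcy3}). De $\prod_i(1-a_i)=0$ vous déduisez que l'un des $1-a_i$ \emph{s'annule} ; or l'argument invoqué (un produit d'inversibles est inversible) montre seulement que l'un d'eux est non inversible, ce qui ne suffit pas dans un anneau commutatif général. L'affirmation est d'ailleurs fausse : dans $k=\mathbb{Z}/15$, on a $X^2-1=(X-4)(X-11)$ avec $4-11=8\in k^\times$, tandis que $1-4=-3$ et $1-11=5$ sont tous deux non nuls (leur produit vaut pourtant $0$). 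La réparation est immédiate : en évaluant la dérivée $nX^{n-1}=\sum_j\prod_{i\ne j}(X-a_i)$ en $a_1$ plutôt qu'en $1$, on obtient $na_1^{n-1}=\prod_{i\ge 2}(a_1-a_i)\in k^\times$ par le lemme~\ref{lm-diffev}, et $a_1\in k^\times$ puisque $a_1^n=1$, d'où $n\in k^\times$. Le texte procède un peu différemment : la condition~\ref{qd3} du lemme~\ref{quot-diag} fournit une relation de Bézout entre $X^n-1$ et $nX^{n-1}$, qu'il suffit d'évaluer en $1$. À cette correction près, votre argument est complet.
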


\begin{proof} Comme l'algèbre $k[\mathbb{Z}/n]$ est isomorphe à $k[X]/(X^n-1)$, l'équivalence entre \ref{itcy1} et \ref{itcy2} constitue un cas particulier du lemme~\ref{quot-diag}. Ce même lemme montre que ces conditions impliquent que $X^n-1\in k[X]$ est étranger à son polynôme dérivé $nX^{n-1}$, ce qui implique que $n$ est inversible dans $k$ (évaluer en $1$). Le lemme~\ref{lm-dpolprod} montre par ailleurs que $k[X]/(\Phi_n(X))$ est une $k$-algèbre fortement diagonalisable si c'est le cas de $k[X]/(X^n-1)$. Ainsi, \ref{itcy2} entraîne \ref{itcy3}.

Supposons maintenant la condition \ref{itcy3} vérifiée ; soit $\xi$ une racine de $\Phi_n$. Pour tout $m\in\mathbb{N}$, $\xi^m$ annule le polynôme cyclotomique $\Phi_{n/\mathrm{pgcd}(n,m)}$. Si $m$ n'est pas divisible par $n$, on en déduit $\xi^m-1\in k^\times$ en appliquant le lemme~\ref{lm-inv_cycl}. Il s'ensuit que les polynômes $X-\xi^i$ ($0\le i<n$) sont deux à deux étrangers, grâce au lemme~\ref{lm-diffev}. Comme chacun d'entre eux divise $X^n-1$, leur produit divise également $X^n-1$ (lemme~\ref{lmev-g}), d'où $X^n-1=\prod_{i=0}^{n-1}(X-\xi^i)$. Cela établit l'implication \ref{itcy3}$\Rightarrow$\ref{itcy2} et termine la démonstration.
\end{proof}

\begin{rem}
Il existe un anneau pseudo-initial vérifiant les conditions de l'énoncé précédent, à savoir $\mathbb{Z}[1/n,X]/(\Phi_n(X))$ (qui se réalise concrètement comme le sous-anneau de $\mathbb{C}$ engendré par $\frac{1}{n}$ et $\exp(\frac{2\imath\pi}{n})$) : $k$ remplit les conditions si et seulement s'il existe un morphisme de l'anneau en question vers $k$.
\end{rem}

\begin{pr}\label{pr-diagalgrp} Soit $V$ un groupe. Les assertions suivantes sont équivalentes.
\begin{enumerate}
\item\label{itdg1} La $k$-algèbre $k[V]$ du groupe $V$ est fortement diagonalisable.
\item\label{itdg2} L'algèbre $k[V]$ est faiblement diagonalisable.
\item\label{itdg3} Le groupe $V$ est commutatif, fini, et son exposant (i.e. le ppcm de l'ordre de ses éléments) $n$ vérifie les conditions de la proposition~\ref{pr-diag_Zn}.
\end{enumerate}
\end{pr}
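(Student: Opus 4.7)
The plan is to establish the cycle $(1)\Rightarrow(2)\Rightarrow(3)\Rightarrow(1)$. The first implication is immediate from the definitions.

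For $(3)\Rightarrow(1)$, the structure theorem for finite abelian groups yields $V\simeq\prod_{i=1}^{s}\mathbb{Z}/n_i$ with each $n_i$ dividing the exponent $n$. Correspondingly, $k[V]\simeq\bigotimes_{i=1}^{s}k[\mathbb{Z}/n_i]$ as $k$-algebras, so by Lemma~\ref{lm-ptd} it suffices to verify that each factor $k[\mathbb{Z}/n_i]$ is strongly diagonalizable. The hypotheses of Proposition~\ref{pr-diag_Zn} for $n$ transfer to each $n_i$: invertibility is automatic since $n_i\mid n$, and if $\xi\in k$ is a root of $\Phi_n$, then $\xi^{n/n_i}$ annihilates $\Phi_{n_i}$ (as already used in the proof of Proposition~\ref{pr-diag_Zn}).

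For $(2)\Rightarrow(3)$, commutativity of $k[V]\simeq k^E$ forces $V$ to be abelian. Granting that $V$ is finite---which, as discussed below, is the most delicate point---$k[V]$ is of finite rank over $k$, and the remark following the definition of diagonalizable algebras upgrades the weak diagonalizability to $k[V]\simeq k^r$ with $r=|V|$. I then decompose $V=\mathbb{Z}/n\times W$ (with $n$ the exponent) via the structure theorem. Reducing modulo an arbitrary maximal ideal $\mathfrak{m}$ of $k$ and applying Maschke's theorem to $(k/\mathfrak{m})[V]\simeq(k/\mathfrak{m})^r$ shows that $|V|$ is invertible in $k/\mathfrak{m}$ for every $\mathfrak{m}$, hence $|V|$---and in particular $|W|$---is a unit in $k$. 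The central idempotent $e_W:=|W|^{-1}\sum_{w\in W}w\in k[V]$ is then well-defined, and multiplication by $e_W$ gives a $k$-algebra surjection $k[V]\twoheadrightarrow e_W\cdot k[V]\simeq k[\mathbb{Z}/n]$ split as a $k[V]$-module morphism by the natural inclusion. Proposition~\ref{pr-fcfd} then yields that $k[\mathbb{Z}/n]$ is strongly diagonalizable, and Proposition~\ref{pr-diag_Zn} supplies the required condition on the exponent.

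The principal obstacle is to show $V$ is finite in $(2)\Rightarrow(3)$. A natural approach reduces to the case where $k$ is a field, via passage to $k/\mathfrak{m}$ for a maximal ideal: over a field, $k[V]\simeq k^E$ makes $k[V]$ von Neumann regular, which forces $V$ to be a torsion group with every finite subgroup of order invertible in $k$. Ruling out the remaining case of an infinite torsion abelian $V$ then calls for a further cardinality or dimension comparison of $k[V]$ and $k^E$ as $k$-vector spaces, or a careful idempotent-counting argument after extension of scalars to an algebraic closure.
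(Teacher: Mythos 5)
Your implications $(1)\Rightarrow(2)$ and $(3)\Rightarrow(1)$ are correct and essentially those of the paper, and the second half of your $(2)\Rightarrow(3)$ (reduction modulo every maximal ideal to invert $|V|$, then the split idempotent $e_W$ giving $k[V]\twoheadrightarrow k[\mathbb{Z}/n]$ and Proposition~\ref{pr-fcfd}) also matches the intended argument. The genuine gap is exactly the point you flag yourself: the finiteness of $V$. You reduce to a field and invoke von Neumann regularity to conclude that $V$ is torsion, but the remaining step --- excluding an infinite torsion abelian group --- is left as a ``cardinality or dimension comparison'' that you do not carry out, and it is not innocuous: over a field one would need something like Erd\H{o}s--Kaplansky to compare $\dim_k k[V]=|V|$ with $\dim_k k^E=|k|^{|E|}$ for $E$ infinite, with several cardinal-arithmetic cases to handle. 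As written, the proof of $(2)\Rightarrow(3)$ is therefore incomplete.

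The paper closes this gap by a short elementary argument that you should adopt: in an algebra $k^E$ with $E\ne\varnothing$, the indicator function $e$ of a single point of $E$ is a nonzero idempotent such that $ae$ is proportional to $e$ for every $a$. Transporting $e$ into $k[V]$ and writing $e=\sum_{u\in V}\lambda(u)[u]$ with $\lambda$ of finite support, the relation $[v]e=\varphi(v)e$ yields $\lambda(u-v)=\varphi(v)\lambda(u)$ for all $u,v\in V$; choosing $t$ with $\lambda(t)\ne 0$ and taking $v=u-t$ shows that $\lambda(u)\ne 0$ for \emph{every} $u\in V$. Since $\lambda$ has finite support, $V$ must be finite. (The same idempotent, via $e=e^2=r\lambda(0)e$, also gives the invertibility of $r=|V|$ over a field directly, without appealing to Maschke, though your Maschke argument is equally valid once finiteness is established.)
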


\begin{proof} Supposons que la condition \ref{itdg3} est vérifiée. Si $m$ est un diviseur de $n$, alors $X^m-1$ divise $X^n-1$, de sorte que l'algèbre $k[\mathbb{Z}/m]$ est fortement diagonalisable grâce au lemme~\ref{lm-dpolprod}. Comme $V$ est la somme directe d'un nombre fini de groupes cycliques dont les ordres divisent $n$, $k[V]$ est le produit tensoriel d'un nombre fini de $k$-algèbres fortement diagonalisables. On en déduit l'implication \ref{itdg3}$\Rightarrow$\ref{itdg1}  par le lemme~\ref{lm-ptd}.

Supposons maintenant que la condition \ref{itdg2} est vérifiée. Comme une algèbre faiblement diagonalisable est commutative et que $k$ est non nul, $V$ est nécessairement commutatif ; on le note additivement. Pour voir que $V$ est fini, on remarque que, dans une algèbre de la forme $k^E$ avec $E\ne\varnothing$, il existe un idempotent non nul $e$ tel que $ae$ soit proportionnel à $e$ pour tout $a$. Traduit dans $k[V]$, si l'on écrit $e=\sum_{u\in V}\lambda(u)[u]$, où les $\lambda(u)\in k$ sont presque tous nuls, cela fournit, pour tout $v\in V$, un scalaire $\varphi(v)$ tel que $[v]e=\varphi(v)e$, soit $\lambda(u-v)=\varphi(v)\lambda(u)$ pour tout $(u,v)\in V^2$. Comme il existe $t\in V$ tel que $\lambda(t)\ne 0$, prenant $v=u-t$, on voit que $\lambda(u)\ne 0$ pour tout $u\in V$, d'où la finitude de $V$.

Montrons maintenant que l'ordre $r$ de $V$ est inversible dans $k$ (ce qui est équivalent à demander que son exposant le soit) dans le cas particulier où $k$ est un corps. Les relations précédentes montrent que $\varphi$ est un morphisme de groupes $V\to k^\times$ et que $\lambda(v)=\lambda(0)\varphi(v)^{-1}$ pour tout $v\in V$. L'idempotence de $e$ s'écrit maintenant
$$e=e^2=\sum_{u\in V}\lambda(u)[u]e=\Big(\sum_{u\in V}\lambda(u)\varphi(u)\Big)e=r\lambda(0)e\;$$
ce qui implique que $r$ est inversible dans le corps $k$, puisque $e\ne 0$.

Montrons maintenant que $n$ est inversible dans $k$ dans le cas général : sinon, cet entier appartiendrait à un idéal maximal $\mathfrak{m}$ de $k$. En réduisant modulo $\mathfrak{m}$, on voit que la  $k/\mathfrak{m}$-algèbre $(k/\mathfrak{m})[V]$ est également faiblement diagonalisable, ce qui permet de se ramener au cas traité précédemment.

Il existe un isomorphisme $V\simeq\mathbb{Z}/n\oplus W$, où $n$ est l'exposant de $V$ et $W$ un groupe dont l'ordre est inversible dans $k$. Il s'ensuit que l'augmentation $k[W]\twoheadrightarrow k$ est un épimorphisme scindé (par l'idempotent $\frac{1}{|W|}\underset{g\in W}{\sum}[g]$) de $k[W]$-algèbres. En tensorisant avec $k[\mathbb{Z}/n]$, on en déduit un épimorphisme scindé de $k[V]$-modules $k[V]\twoheadrightarrow k[\mathbb{Z}/n]$. La proposition~\ref{pr-fcfd} montre alors que $k[\mathbb{Z}/n]$ est une $k$-algèbre fortement diagonalisable (comme $V$ est fini, l'hypothèse sur $k[V]$ implique que cette algèbre est \textit{fortement} diagonalisable). Cela achève d'établir l'implication \ref{itdg2}$\Rightarrow$\ref{itdg3}.

Comme l'implication \ref{itdg1}$\Rightarrow$\ref{itdg2} est triviale, cela termine la démonstration.
\end{proof}

\begin{rem} Si l'on suppose que $k$ est un corps (cas le plus classique), on peut raccourcir la démonstration, par exemple en utilisant la théorie élémentaire des algèbres semi-simples.
\end{rem}

\subsection{Inversion de Fourier pour les groupes abéliens finis}\label{par-Fou}

Il s'agit de rendre explicite, sous les hypothèses de la proposition~\ref{pr-diagalgrp}, l'isomorphisme de $k$-algèbres $k[V]\simeq k^{|V|}$.

Si $V$ est un groupe abélien fini, on note
$$V^\sharp:=\Hom_\mathbb{Z}(V,\mathbb{Q}/\mathbb{Z})$$
son dual (qui est isomorphe, non naturellement, à $V$). Ainsi, $(-)^\sharp$ est une équivalence de catégories entre la catégorie des groupes abéliens finis et sa catégorie opposée ; cette équivalence est son propre quasi-inverse. Si $v$ est un élément de $V$ et $l$ un élément de $V^\sharp$, on notera souvent $<v,l>$ pour $l(v)$ afin de faciliter la lecture. Les éléments de $V^\sharp$ sont à valeurs dans le sous-groupe cyclique de $\mathbb{Q}/\mathbb{Z}$ dont l'ordre est l'exposant de $V$.

\begin{nota} Soient $V$ un groupe abélien fini d'exposant $N$ et $T$ un sous-groupe de $\mathbb{Q}/\mathbb{Z}$ contenant son sous-groupe d'ordre $N$.
Si $\varepsilon : T\to k^\times$ est un morphisme de groupes, on note $\Phi_{V,\varepsilon} : k[V]\to k^{V^\sharp}$ le morphisme de $k$-algèbres donné par $\Phi_{V,\varepsilon}([v])(l):=\varepsilon(<v,l>)$ pour tout $(v,l)\in V\times V^\sharp$.
\end{nota}

\begin{defi} Soit $n\in\mathbb{N}^*$. Une \textit{$n$-racine primitive}  de l'unité dans $k$ est  une racine du $n$-ième polynôme cyclotomique de $k[X]$.
\end{defi}

\begin{rem} 
\begin{enumerate}
\item Une $n$-racine primitive de l'unité dans $k$ est un élément d'ordre divisant $n$ du groupe $k^\times$, et d'ordre exactement $n$ si $k$ est de caractéristique nulle ou étrangère à $n$.
\item Toute puissance d'une racine primitive de l'unité est une racine primitive de l'unité. Toutefois, contrairement au cas où $k$ est un corps, toute racine de l'unité n'est pas nécessairement primitive, même si son ordre est inversible dans $k$ (si $k$ n'est pas de caractéristique $2$, dans l'anneau produit $k^2$, l'élément $(1,-1)$ est une racine de l'unité d'ordre $2$ qui n'est pas primitive).
\end{enumerate}
\end{rem}

\begin{pr}[Inversion de Fourier pour les groupes abéliens finis]\label{pr-invF} Soit $V$ un groupe abélien fini d'exposant $e$.  Supposons que $e$ est inversible dans $k$, et que $\varepsilon : T\hookrightarrow k^\times$, où $T$ est un sous-groupe de $\mathbb{Q}/\mathbb{Z}$ contenant $\mathbb{Z}/e$, est un morphisme de groupes envoyant la classe de $\frac{1}{e}$ sur une $e$-racine primitive de l'unité dans $k$. Alors $\Phi_{V,\varepsilon} : k[V]\to k^{V^\sharp}$ est un isomorphisme de $k$-algèbres, dont l'inverse est donné par
$$\Psi_{V,\varepsilon} : f\in k^{V^\sharp}\mapsto\sum_{v\in V}\hat{f}(v)[v]\;,$$
où $\hat{f}$ est la \textbf{transformée de Fourier} de $f$ :
$$\hat{f} : V\to k\qquad v\mapsto \;\frac{1}{|V|}\sum_{l\in V^\sharp}f(l)\varepsilon(-<v,l>).$$
\end{pr}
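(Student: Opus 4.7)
The plan is to verify directly that $\Phi_{V,\varepsilon}$ and $\Psi_{V,\varepsilon}$ are mutually inverse $k$-linear maps; since $\Phi_{V,\varepsilon}$ is manifestly a morphism of $k$-algebras (by bilinearity of the pairing $V\times V^\sharp\to\mathbb{Q}/\mathbb{Z}$ together with multiplicativity of $\varepsilon$), this immediately yields the desired $k$-algebra isomorphism. Everything reduces to the two orthogonality relations
\[\sum_{l\in V^\sharp}\varepsilon(\langle u,l\rangle)=|V|\,\delta_{u,0}\quad(u\in V)\qquad\text{and}\qquad\sum_{v\in V}\varepsilon(\langle v,m\rangle)=|V|\,\delta_{m,0}\quad(m\in V^\sharp),\]
the second being obtained by applying the first to $V^\sharp$ via the canonical isomorphism $(V^\sharp)^\sharp\simeq V$. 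Observe that $|V|$ is invertible in $k$, since every prime divisor of $|V|$ also divides the exponent $e$.

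The main obstacle is proving orthogonality for $u\neq 0$. For $u=0$ the sum is simply $|V^\sharp|=|V|$. For $u\neq 0$, double duality supplies some $l_0\in V^\sharp$ with $\langle u,l_0\rangle\neq 0$; translating $l\mapsto l+l_0$ in $S:=\sum_l\varepsilon(\langle u,l\rangle)$ yields $S=\varepsilon(\langle u,l_0\rangle)S$, hence $(1-\varepsilon(\langle u,l_0\rangle))S=0$, so it suffices to prove $1-\varepsilon(\langle u,l_0\rangle)\in k^\times$. Let $d>1$ be the order of $\langle u,l_0\rangle$ in $\mathbb{Q}/\mathbb{Z}$; then $d$ divides $e$, and $\langle u,l_0\rangle=j/d$ for some $j$ coprime to $d$. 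Setting $\zeta:=\varepsilon(1/e)$, a root of $\Phi_e$ in $k$ by hypothesis, we have $\varepsilon(\langle u,l_0\rangle)=\zeta^{je/d}$, and the computation reproduced in the proof of Proposition~\ref{pr-diag_Zn} (noting $\mathrm{pgcd}(e,je/d)=e/d$) shows that $\zeta^{je/d}$ annihilates the cyclotomic polynomial $\Phi_d$. Lemma~\ref{lm-inv_cycl} then yields $\varepsilon(\langle u,l_0\rangle)-1\in k^\times$, and therefore $S=0$.

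With both orthogonality relations in hand, the proposition follows from two short formal computations on basis elements. For $w\in V$, expanding $\Psi_{V,\varepsilon}(\Phi_{V,\varepsilon}([w]))$ and swapping summations gives $\sum_v\bigl(\tfrac{1}{|V|}\sum_l\varepsilon(\langle w-v,l\rangle)\bigr)[v]$, which collapses to $[w]$ by the first orthogonality identity. Symmetrically, evaluating $\Phi_{V,\varepsilon}(\Psi_{V,\varepsilon}(\delta_{l_0}))$ at $l\in V^\sharp$, for $\delta_{l_0}$ the indicator function of $\{l_0\}$, produces $\tfrac{1}{|V|}\sum_v\varepsilon(\langle v,l-l_0\rangle)=\delta_{l,l_0}$ by the second orthogonality identity. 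Hence $\Phi_{V,\varepsilon}$ and $\Psi_{V,\varepsilon}$ are mutually inverse, which proves the proposition.
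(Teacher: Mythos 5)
Your proof is correct and follows essentially the same route as the paper's: establish the orthogonality relation by the translation trick $l\mapsto l+l_0$, use that $\varepsilon(\langle u,l_0\rangle)$ is a primitive root of unity annihilating $\Phi_d$ for some $d>1$ dividing $e$ so that Lemma~\ref{lm-inv_cycl} gives $\varepsilon(\langle u,l_0\rangle)-1\in k^\times$, and then conclude by a formal computation, treating the second composition by duality. Your write-up is in fact slightly more explicit than the paper's on why Lemma~\ref{lm-inv_cycl} applies (identifying the exact cyclotomic polynomial annihilated by $\zeta^{je/d}$) and on why $|V|$ is invertible in $k$.
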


\begin{proof} Soit $x\in V$ ; calculons la transformée de Fourier de $\Phi_{V,\varepsilon}([x])$ : évaluée en $v\in V$, elle vaut
$$\frac{1}{|V|}\sum_{l\in V^\sharp}\varepsilon(l(x)).\varepsilon(-l(v))=\frac{1}{|V|}\sum_{l\in V^\sharp}\varepsilon(l(x-v))\;,$$
qui vaut manifestement $1$ lorsque $v=x$. Si $v\ne x$, alors il existe $L\in V^\sharp$ tel que $L(x-v)\ne 1$ ; comme $l\mapsto l+L$ est une permutation de $V^\sharp$, on a $\widehat{\Phi_{V,\varepsilon}([x])}(v)=L(x-v).\widehat{\Phi_{V,\varepsilon}([x])}(v)$. Comme $L(x-v)$ est une racine de l'unité dont l'ordre divise $e$, et est donc inversible dans $k$, et est distincte de $1$, on a $L(x-v)-1\in k^\times$ par le lemme~\ref{lm-inv_cycl}. Il s'ensuit que $\widehat{\Phi_{V,\varepsilon}([x])}(v)=0$. Par conséquent, la composée $\Psi_{V,\varepsilon} \circ\Phi_{V,\varepsilon}$ est l'identité de $k[V]$.

La démonstration que $\Phi_{V,\varepsilon} \circ\Psi_{V,\varepsilon}$ est l'identité de $k^{V^\sharp}$ est entièrement analogue, ou se déduit de ce qui précède en remplaçant $V$ par $V^\sharp$, d'où le résultat.
\end{proof}

\subsection{Sommes de Gau\ss}\label{pgau}

\begin{nota} On note $\zcyc$ le sous-anneau de $\mathbb{C}$ engendré par les racines de l'unité.
\end{nota}

\begin{hyp} Dans tout le §\,\ref{pgau}, on suppose que $k$ est un sous-anneau de $\mathbb{C}$ contenant $\zcyc$, et $N$ désigne un entier strictement positif.
\end{hyp}

Si $\chi : (\mathbb{Z}/N)^\times\to k^\times$ et $\tau : \mathbb{Z}/N\to k^\times$ sont des morphismes de groupes, on note $\G_N(\chi,\tau)$ la \textit{somme de Gau\ss}
\begin{equation}\label{eq-dfgs}
\G_N(\chi,\tau):=\sum_{t\in(\mathbb{Z}/N)^\times}\chi(t)\tau(t)\;.
\end{equation}

Un morphisme $\chi : (\mathbb{Z}/N)^\times\to k^\times$ est aussi appelé \textit{caractère} modulo $N$. On dit que $\chi$ est \textit{imprimitif} s'il existe un diviseur strict $d$ de $N$ tel que $\chi$ se factorise à travers le morphisme canonique $(\mathbb{Z}/N)^\times\to(\mathbb{Z}/d)^\times$ ; dans le cas contraire, $\chi$ est dit \textit{primitif}.

\begin{nota}\label{not-eps} Pour $u\in\mathbb{N}$, on note $\varepsilon_u : \mathbb{Z}/N\to k^\times$ le morphisme $\bar{t}\mapsto\exp(\frac{2\imath\pi tu}{N})$, où $\bar{t}$ désigne la classe modulo $N$ de $t\in\mathbb{Z}$. On note simplement $\varepsilon$ pour $\varepsilon_1$.
\end{nota}

Pour la démonstration du résultat classique suivant, voir par exemple \cite[th.~9.7]{MV}.
\begin{pr}\label{pr-gprim} Supposons $N$ inversible dans $k$. Soient $\chi : (\mathbb{Z}/N)^\times\to k^\times$ un caractère primitif et $u\in\mathbb{Z}$.
\begin{enumerate}
\item Si $u$ et $N$ sont étrangers, alors $\G_N(\chi,\varepsilon_u)\in k^\times$ ; de plus, $\G_N(\chi,\varepsilon_u)=\chi(u)^{-1}\G_N(\chi,\varepsilon)$.
\item Dans le cas contraire, $\G_N(\chi,\varepsilon_u)=0$.
\end{enumerate}
\end{pr}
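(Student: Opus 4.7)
Le plan est de traiter séparément les deux cas, la première assertion se ramenant à l'inversibilité de $\G_N(\chi,\varepsilon)$ dans $k$.

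Pour $u$ étranger à $N$, le changement de variable $t\mapsto u^{-1}t$ dans la somme~\eqref{eq-dfgs} fournit aussitôt l'identité $\G_N(\chi,\varepsilon_u)=\chi(u)^{-1}\G_N(\chi,\varepsilon)$, en n'utilisant que la multiplicativité de $\chi$ et la relation $\varepsilon_u(t)=\varepsilon(ut)$. Cela ramène le premier point à démontrer que $\G_N(\chi,\varepsilon)$ est un inversible de $k$.

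Pour établir cette inversibilité, j'exploiterais l'hypothèse $\zcyc\subseteq k\subseteq\mathbb{C}$ : le conjugué complexe $\overline{\G_N(\chi,\varepsilon)}$ appartient encore à $k$, et il suffit d'établir l'identité classique $\G_N(\chi,\varepsilon)\cdot\overline{\G_N(\chi,\varepsilon)}=N$, laquelle conclut puisque $N\in k^\times$. C'est selon moi l'obstacle principal : en développant le produit et en réindexant par $v=s t^{-1}$, on obtient une somme de la forme $\sum_v\chi(v)\,c_N(v-1)$ faisant intervenir les sommes de Ramanujan $c_N$, et la primitivité de $\chi$ assure que seul le terme $v=1$ contribue (les autres s'annulant via une inversion de Möbius sur les diviseurs stricts de $N$). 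Ce calcul est précisément celui de \cite[th.~9.7]{MV}.

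Pour le second point, supposons $d:=\mathrm{pgcd}(u,N)>1$ et posons $M:=N/d$, diviseur strict de $N$. Comme $d$ divise $u$, le morphisme $\varepsilon_u$ se factorise par la projection $\mathbb{Z}/N\twoheadrightarrow\mathbb{Z}/M$. La primitivité de $\chi$ fournit un élément $a\in(\mathbb{Z}/N)^\times$ avec $a\equiv 1\pmod M$ et $\chi(a)\ne 1$ (dans le cas contraire, $\chi$ se factoriserait par la surjection canonique $(\mathbb{Z}/N)^\times\twoheadrightarrow(\mathbb{Z}/M)^\times$, dont le noyau est précisément formé des classes $\equiv 1\pmod M$). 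Le changement de variable $t\mapsto at$ dans~\eqref{eq-dfgs} donne alors
$$\chi(a)\,\G_N(\chi,\varepsilon_u)=\sum_{s\in(\mathbb{Z}/N)^\times}\chi(s)\varepsilon_u(a^{-1}s)=\G_N(\chi,\varepsilon_u)\;,$$
puisque $a^{-1}\equiv 1\pmod M$ et que $\varepsilon_u$ ne dépend que de la classe modulo $M$. Il en résulte $(\chi(a)-1)\,\G_N(\chi,\varepsilon_u)=0$ ; comme $k$ est intègre (en tant que sous-anneau de $\mathbb{C}$) et que $\chi(a)\ne 1$, on en déduit $\G_N(\chi,\varepsilon_u)=0$, ce qui termine l'esquisse.
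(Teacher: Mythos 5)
Le texte ne démontre pas cette proposition : il renvoie intégralement à \cite[th.~9.7]{MV}. Votre proposition reconstruit donc l'argument standard, et elle est pour l'essentiel correcte. La réduction du premier point à l'inversibilité de $\G_N(\chi,\varepsilon)$ par le changement de variable $t\mapsto u^{-1}t$ est juste, de même que l'observation que $\overline{\G_N(\chi,\varepsilon)}\in\zcyc\subseteq k$ (les valeurs de $\chi$ et de $\varepsilon$ étant des racines de l'unité) et que l'identité $\G_N(\chi,\varepsilon)\overline{\G_N(\chi,\varepsilon)}=N$ conclut lorsque $N\in k^\times$. Le second point est complet et correct : la factorisation de $\varepsilon_u$ par $\mathbb{Z}/M$ avec $M=N/\mathrm{pgcd}(u,N)$, l'existence de $a\equiv 1\pmod M$ avec $\chi(a)\ne 1$ tirée de la primitivité, et l'intégrité de $k$ (sous-anneau de $\mathbb{C}$) suffisent.

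Une imprécision toutefois dans votre esquisse du point crucial $|\G_N(\chi,\varepsilon)|^2=N$ : après réindexation, on obtient bien $\sum_v\chi(v)\,c_N(v-1)$, mais il est faux que « seul le terme $v=1$ contribue » — ce terme vaut $c_N(0)=\varphi(N)$, et non $N$ (déjà pour $N=p$ premier, les termes $v\ne 1$ apportent collectivement $+1$). L'annulation correcte s'obtient en développant d'abord les sommes de Ramanujan par Möbius, $c_N(v-1)=\sum_{d\mid\mathrm{pgcd}(v-1,N)}d\,\mu(N/d)$, puis en intervertissant : pour chaque diviseur strict $d$ de $N$, la somme intérieure $\sum_{v\equiv 1\,(d)}\chi(v)$ est nulle par primitivité de $\chi$ (c'est une somme sur le noyau de $(\mathbb{Z}/N)^\times\to(\mathbb{Z}/d)^\times$, sur lequel $\chi$ n'est pas trivial), et seul le diviseur $d=N$ survit, donnant $N\mu(1)\chi(1)=N$. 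Comme vous renvoyez de toute façon à la même référence que l'article pour ce calcul, cela ne compromet pas la validité globale de votre démonstration, mais la phrase telle qu'elle est écrite est incorrecte.
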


Nous aurons également besoin du résultat suivant sur les caractères imprimitifs, qui constitue un cas particulier de \cite[th.~9.10]{MV}.

\begin{pr}\label{pr-gimp} Supposons que $N=p^r$ avec $r>1$, que $\chi : (\mathbb{Z}/N)^\times\to k^\times$ est un caractère imprimitif et que $\tau : \mathbb{Z}/N\to k^\times$ est un morphisme injectif. Alors $\G_N(\chi,\tau)=0$.
\end{pr}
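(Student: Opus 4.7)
L'idée est d'exploiter l'imprimitivité de $\chi$ pour regrouper la somme de Gau\ss\ suivant les fibres d'une projection canonique, puis d'identifier la somme interne comme une somme de racines $p$-ièmes de l'unité. Comme les diviseurs de $p^r$ forment une chaîne, tout caractère imprimitif modulo $p^r$ se factorise nécessairement à travers $(\mathbb{Z}/p^{r-1})^\times$ : $\chi(t)$ ne dépend donc que de la classe de $t$ modulo $p^{r-1}$.

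Je considérerais alors la surjection canonique $(\mathbb{Z}/p^r)^\times\twoheadrightarrow(\mathbb{Z}/p^{r-1})^\times$, dont chaque fibre possède exactement $p$ éléments, obtenus à partir d'un relèvement $\tilde{t}_0\in(\mathbb{Z}/p^r)^\times$ par les translations $\tilde{t}_0+jp^{r-1}$ pour $j\in\{0,\dots,p-1\}$. La multiplicativité de $\tau$ et l'invariance de $\chi$ sur chaque fibre donneraient alors la factorisation
\[\G_{p^r}(\chi,\tau)=\Big(\sum_{\tilde{t}_0}\chi(\tilde{t}_0)\tau(\tilde{t}_0)\Big)\cdot\sum_{j=0}^{p-1}\tau(p^{r-1})^j,\]
où $\tilde{t}_0$ parcourt un système de représentants de la projection. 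Il suffirait donc de montrer la nullité de la somme interne.

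Or $p^{r-1}$ est non nul dans $\mathbb{Z}/p^r$ (car $r>1$), donc par injectivité de $\tau$ on a $\tau(p^{r-1})\ne 1$ ; par ailleurs l'égalité $p\cdot p^{r-1}=0$ dans $\mathbb{Z}/p^r$ donne $\tau(p^{r-1})^p=1$. Comme $p$ est premier, $\tau(p^{r-1})$ est donc une racine \emph{primitive} $p$-ième de l'unité dans $\mathbb{C}$, si bien que $\sum_{j=0}^{p-1}\tau(p^{r-1})^j$ est la somme de toutes les racines $p$-ièmes de l'unité, qui vaut zéro.

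Le seul point un peu délicat, mais élémentaire, est l'observation initiale selon laquelle un caractère imprimitif modulo $p^r$ se factorise à travers $(\mathbb{Z}/p^{r-1})^\times$ : elle tient à la structure totalement ordonnée des diviseurs d'une puissance de nombre premier, qui distingue nettement ce cas de celui d'un module $N$ composite quelconque.
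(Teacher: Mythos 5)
Votre démonstration est correcte. Signalons d'abord que l'article ne démontre pas cette proposition : elle y est simplement présentée comme un cas particulier de \cite[th.~9.10]{MV}. Vous fournissez donc une preuve directe là où le texte se contente d'une référence, et votre argument --- factoriser $\chi$ à travers $(\mathbb{Z}/p^{r-1})^\times$ (ce qui est licite puisque les diviseurs de $p^r$ forment une chaîne), regrouper la somme suivant les fibres de la réduction $(\mathbb{Z}/p^r)^\times\twoheadrightarrow(\mathbb{Z}/p^{r-1})^\times$, qui sont exactement les ensembles $\{\tilde{t}_0+jp^{r-1}\}_{0\le j<p}$, puis constater que la somme interne $\sum_{j=0}^{p-1}\tau(p^{r-1})^j$ s'annule --- est pour l'essentiel la démonstration standard du résultat cité, spécialisée au cas d'un module puissance d'un nombre premier, où elle devient particulièrement transparente ; c'est un gain réel d'autonomie pour le lecteur. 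Deux remarques mineures : ce que vous nommez multiplicativité de $\tau$ est en réalité le fait que $\tau$ est un morphisme du groupe \emph{additif} $\mathbb{Z}/p^r$ vers $k^\times$, d'où $\tau(\tilde{t}_0+jp^{r-1})=\tau(\tilde{t}_0)\tau(p^{r-1})^j$ ; par ailleurs, la primitivité de la racine n'est pas indispensable, car il suffit que $\zeta:=\tau(p^{r-1})$ vérifie $\zeta\ne 1$ (injectivité de $\tau$ et $p^{r-1}\ne 0$) et $\zeta^p=\tau(p^r)=1$ dans l'anneau intègre $k\subset\mathbb{C}$ pour que l'identité $(\zeta-1)\sum_{j=0}^{p-1}\zeta^j=\zeta^p-1=0$ donne la nullité voulue.
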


\begin{pr}\label{pr-gimp2} Supposons que $N=p^r$ avec $r>1$, que $\chi : (\mathbb{Z}/N)^\times\to k^\times$ est un caractère imprimitif et $\tau : \mathbb{Z}/N\to k^\times$ est un morphisme non injectif. Notons $\bar{\chi} :  (\mathbb{Z}/p^{r-1})^\times\to k^\times$ (resp. $\bar{\tau} : \mathbb{Z}/p^{r-1}\to k^\times$) le morphisme de groupes induit par $\chi$ (resp. $\tau$). Alors $\G_{p^r}(\chi,\tau)=p.\G_{p^{r-1}}(\bar{\chi},\bar{\tau})$.
\end{pr}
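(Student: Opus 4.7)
Le plan est de ramener la somme sur $(\mathbb{Z}/p^r)^\times$ à une somme sur $(\mathbb{Z}/p^{r-1})^\times$ en regroupant les termes selon les fibres de la réduction canonique $\pi : \mathbb{Z}/p^r\to\mathbb{Z}/p^{r-1}$ (et sa restriction aux groupes d'unités, que je noterai encore $\pi$ par abus). Il s'agit d'abord d'observer que $\chi$ et $\tau$ se factorisent effectivement à travers $\pi$ : c'est précisément la conjonction de l'hypothèse d'imprimitivité et de celle de non-injectivité, spécialisées au cas d'une puissance de nombre premier.

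Je commencerais donc par vérifier ces deux factorisations. Pour $\chi$ imprimitif, il existe par définition un diviseur strict de $p^r$, nécessairement de la forme $p^s$ avec $s\le r-1$, par lequel $\chi$ se factorise. En utilisant la factorisation canonique de la réduction $(\mathbb{Z}/p^r)^\times\to(\mathbb{Z}/p^s)^\times$ sous la forme $(\mathbb{Z}/p^r)^\times\to(\mathbb{Z}/p^{r-1})^\times\to(\mathbb{Z}/p^s)^\times$, on obtient $\chi=\bar\chi\circ\pi$ pour $\bar\chi$ le morphisme de l'énoncé. Pour $\tau$ non injectif, son noyau est un sous-groupe non trivial du groupe cyclique $\mathbb{Z}/p^r$ (d'ordre une puissance de $p$), et contient donc l'unique sous-groupe minimal non trivial, à savoir $p^{r-1}\mathbb{Z}/p^r$ ; ainsi $\tau$ passe au quotient $\mathbb{Z}/p^{r-1}$ en $\bar\tau$ comme annoncé.

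Je regrouperais ensuite les termes de la somme de Gau\ss\ selon leur image par $\pi$. La réduction $\pi : (\mathbb{Z}/p^r)^\times\to(\mathbb{Z}/p^{r-1})^\times$ est surjective et son noyau s'identifie à $\{1+jp^{r-1}\mid 0\le j<p\}$ (dont tous les éléments sont bien inversibles puisque $r>1$, donc congrus à $1$ modulo $p$), qui est de cardinal $p$ ; toutes ses fibres ont donc exactement $p$ éléments. Il en résulte
$$\G_{p^r}(\chi,\tau)=\sum_{t\in(\mathbb{Z}/p^r)^\times}\bar\chi(\pi(t))\bar\tau(\pi(t))=\sum_{s\in(\mathbb{Z}/p^{r-1})^\times}|\pi^{-1}(s)|\cdot\bar\chi(s)\bar\tau(s)=p\cdot\G_{p^{r-1}}(\bar\chi,\bar\tau),$$
ce qui est la formule voulue.

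L'argument est essentiellement combinatoire et je ne vois pas d'obstacle substantiel. Le seul point à surveiller est que l'imprimitivité, dans le cas d'une puissance de nombre premier, permet bien de descendre exactement d'un cran (fait spécifique à la structure totalement ordonnée par divisibilité des diviseurs de $p^r$) ; c'est ce qui fait que l'énoncé s'écrit si simplement sans hypothèse supplémentaire sur le conducteur de $\chi$.
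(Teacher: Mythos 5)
Votre démonstration est correcte et suit exactement la même idée que celle du texte (qui se contente d'invoquer la définition et le fait qu'un élément de $\mathbb{Z}/p^r$ est inversible si et seulement si son image dans $\mathbb{Z}/p^{r-1}$ l'est) : regroupement des termes selon les fibres, de cardinal $p$, de la réduction $(\mathbb{Z}/p^r)^\times\to(\mathbb{Z}/p^{r-1})^\times$. Les vérifications que vous explicitez (factorisation de $\chi$ par imprimitivité, factorisation de $\tau$ via l'unicité du sous-groupe minimal de $\mathbb{Z}/p^r$) sont exactement celles qui justifient l'existence des morphismes induits $\bar\chi$ et $\bar\tau$ de l'énoncé.
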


\begin{proof} Cela résulte de la définition et du fait qu'un élément de $\mathbb{Z}/p^r$ est inversible si et seulement si son image dans $\mathbb{Z}/p^{r-1}$ est inversible.
\end{proof}

\section{Le problème de l'isomorphisme fonctoriel $k[A]\simeq k^B$ pour des foncteurs additifs $A$ et $B$}

\begin{nota} \textbf{Dans toute la suite de cet article, $\A$ désigne une catégorie additive essentiellement petite.}
\end{nota}


\begin{nota} On désigne par $\mathbf{Ab}$ la catégorie des groupes abéliens et par $\mathbf{Ab}^f$ sa sous-catégorie pleine constituée des groupes abéliens finis. Pour tout entier $N>0$, on note $\mathbf{Ab}^f_N$ la sous-catégorie pleine de $\mathbf{Ab}^f$ des groupes abéliens finis annulés par $N$. On note $\mathbf{Ab}^f_{(p)}$ la sous-catégorie pleine de $\mathbf{Ab}^f$ des $p$-groupes abéliens finis (où $p$ est un nombre premier).
\end{nota}

\begin{nota}
Si $\C$ est une catégorie essentiellement petite, on désigne par $\F(\C;k)$ la catégorie dont les objets sont les foncteurs de $\C$ vers $k\Md$ et les morphismes les transformations naturelles.
\end{nota}

Si $A : \A\to\mathbf{Ab}$ est un foncteur additif, on note $k[A]$ le foncteur de $\F(\A;k)$ donné par la composée $\A\xrightarrow{A}\mathbf{Ab}\to\mathbf{Ens}\xrightarrow{k[-]}k\Md$, où la flèche centrale est le foncteur d'oubli, et $k^A$ le foncteur de $\F(\A^\op;k)$ donné par la composée $\A^\op\xrightarrow{A}\mathbf{Ab}^\op\to\mathbf{Ens}^\op\xrightarrow{k^{(-)}}k\Md$. Ainsi $k^A\simeq\Hom_k(-,k)\circ k[A]$.

\subsection{Additivisation}

Les lemmes élémentaires qui suivent sont bien connus et peuvent s'étendre aux approximations polynomiales de degré supérieur des foncteurs (dont nous n'aurons jamais usage ici) --- cf. par exemple \cite[§\,4.2.1]{DT-poids}. 

\begin{lm} Soit $F$ un foncteur de $\F(\A;k)$. Le plus grand quotient additif de $F$ est le foncteur $F_\mathrm{add}$ donné par 
$$F_\mathrm{add}(a):=\mathrm{Coker}\big(F(a\oplus a)\xrightarrow{F(p_1)+F(p_2)-F(\nabla)} F(a)\big)\;$$
où $p_i : a\oplus a\to a$ ($i\in\{1,2\}$) désigne la $i$-ème projection et $\nabla : a\oplus a\to a$ la somme.
\end{lm}

\begin{proof}
C'est une conséquence formelle directe de ce que $F$ est additif si et seulement si $F(p_1)+F(p_2)-F(\nabla)=0$.
\end{proof}

l'énoncé suivant est analogue au (et dual du) précédent.

\begin{lm} Soit $F$ un foncteur de $\F(\A;k)$. Le plus grand sous-foncteur additif de $F$ est le foncteur $F^\mathrm{add}$ donné par 
$$F^\mathrm{add}(a):=\mathrm{Ker}\big(F(a)\xrightarrow{F(i_1)+F(i_2)-F(\Delta)} F(a\oplus a)\big)\;$$
où $i_j : a\to a\oplus a$ ($j\in\{1,2\}$) désigne la $i$-ème inclusion et $\Delta : a\to a\oplus a$ la diagonale.
\end{lm}

Les deux énoncés suivants s'obtiennent à partir des précédents par un calcul direct.

\begin{lm}\label{lmadd1} Soit $A : \A\to\mathbf{Ab}$ un foncteur additif. On a $k[A]_\mathrm{add}\simeq k\otimes_\mathbb{Z} A$ et $k[A]^\mathrm{add}=0$.
\end{lm}

\begin{lm}\label{lmadd2}  Soit $B : \A^\op\to\mathbf{Ab}$ un foncteur additif. On a $(k^B)_\mathrm{add}=0$ et $(k^B)^\mathrm{add}\simeq\Hom_\mathbb{Z}(B,k)$.
\end{lm}

Il résulte des lemmes~\ref{lmadd1} et \ref{lmadd2} que :

\begin{pr}\label{pr-ckt} Soient $\A$ une petite catégorie additive, $A : \A\to\mathbf{Ab}$ et $B : \A^\op\to\mathbf{Ab}$ des foncteurs additifs. Si les foncteurs $k[A]$ et $k^B$ de $\F(\A;k)$ sont isomorphes, alors les foncteurs additifs $k\otimes_\mathbb{Z} A$ et $\Hom_\mathbb{Z}(B,k)$ sont nuls.
\end{pr}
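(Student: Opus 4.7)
The plan is to exploit the naturality of the two constructions $(-)_{\mathrm{add}}$ and $(-)^{\mathrm{add}}$ with respect to morphisms of functors in $\F(\A;k)$. Both are defined, respectively, as the cokernel of $F(p_1)+F(p_2)-F(\nabla)$ and the kernel of $F(i_1)+F(i_2)-F(\Delta)$, hence they yield endofunctors of $\F(\A;k)$ and any isomorphism $F\simeq G$ induces isomorphisms $F_{\mathrm{add}}\simeq G_{\mathrm{add}}$ and $F^{\mathrm{add}}\simeq G^{\mathrm{add}}$.

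The first step is therefore simply to apply these two operations to the given isomorphism $k[A]\simeq k^B$ in $\F(\A;k)$, obtaining isomorphisms
\[
k[A]_{\mathrm{add}}\simeq (k^B)_{\mathrm{add}}\quad\text{and}\quad k[A]^{\mathrm{add}}\simeq (k^B)^{\mathrm{add}}.
\]

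The second step is to identify all four terms via the two lemmas recalled just above the statement. Lemma~\ref{lmadd1} gives $k[A]_{\mathrm{add}}\simeq k\otimes_{\mathbb{Z}}A$ and $k[A]^{\mathrm{add}}=0$, while Lemma~\ref{lmadd2} gives $(k^B)_{\mathrm{add}}=0$ and $(k^B)^{\mathrm{add}}\simeq\Hom_{\mathbb{Z}}(B,k)$. Substituting, the two displayed isomorphisms become $k\otimes_{\mathbb{Z}}A\simeq 0$ and $0\simeq\Hom_{\mathbb{Z}}(B,k)$, which is exactly the conclusion.

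There is really no obstacle in this proof: everything is formal once the two lemmas are granted, the only thing worth emphasising being the endofunctoriality of $(-)_{\mathrm{add}}$ and $(-)^{\mathrm{add}}$ that ensures isomorphic functors have isomorphic additivisations on both sides. The content of the statement lies entirely in the preceding computations identifying these additivisations, which the author has already established.
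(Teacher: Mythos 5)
Your proof is correct and is exactly the argument the paper intends: the proposition is stated there as an immediate consequence of Lemmas~\ref{lmadd1} and~\ref{lmadd2}, and your write-up simply makes explicit the (purely formal) functoriality of $(-)_{\mathrm{add}}$ and $(-)^{\mathrm{add}}$ that justifies applying them to the isomorphism $k[A]\simeq k^B$. Nothing is missing.
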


Cela nous conduit à introduire la notion suivante, directement inspirée de \cite[déf.~4.1]{DTV}, dont nous reprenons la terminologie.

\begin{defi} Un foncteur additif $A : \A\to\mathbf{Ab}$ est dit \textbf{$k$-trivial} s'il est à valeurs dans les groupes abéliens finis et que le foncteur $k\otimes_\mathbb{Z} A$ est nul.
\end{defi}

L'ajout de la condition des valeurs finies, par rapport à la proposition~\ref{pr-ckt}, provient du fait qu'il est beaucoup plus facile de traiter de la linéarisation de foncteurs additifs lorsqu'ils sont à valeurs finies que dans le cas général. Il importe que noter que, si le fait que la $k$-algèbre d'un groupe soit isomorphe à un produit (a priori infini) de copies de $k$ implique que ce groupe est nécessairement fini (cf. proposition~\ref{pr-diagalgrp}), l'existence d'un isomorphisme $k[A]\simeq k^B$ pour des foncteurs additifs $A$ et $B$ n'entraîne \emph{pas} que $A$ ou $B$ soit à valeurs finies, comme nous le verrons au §\,\ref{ssect-vi}. Ce n'est que dans le cas de foncteurs à valeurs finies que nous pourrons caractériser complètement l'existence d'un tel isomorphisme (en combinant la proposition~\ref{pr-ckt} et le théorème~\ref{thprinc} ci-après).

\subsection{Morphismes}

L'énoncé élémentaire et classique suivant figure verbatim dans \cite[lemme~C.1]{DTV}.

\begin{pr} Soient $A, B : \A\to\mathbf{Ab}$ des foncteurs additifs. Le morphisme $k$-linéaire canonique
$$k[\Hom(A,B)]\to\Hom(k[A],k[B])$$
induit par la post-composition par $k[-]$ est injectif ; il est bijectif si $A$ est de type fini.
\end{pr}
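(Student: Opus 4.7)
My plan is to prove injectivity directly by exhibiting a single evaluation that simultaneously distinguishes all the constituent natural transformations, and to establish surjectivity in two stages: first when $A$ is representable (a direct Yoneda computation), and then for general finite-type $A$ by picking a surjection from a representable. For injectivity, consider $\sum_{i=1}^n \lambda_i [\varphi_i]$ in the kernel, with the $\varphi_i \in \Hom(A,B)$ pairwise distinct. For each pair $i<j$ I choose $y_{ij} \in \A$ and $a_{ij} \in A(y_{ij})$ with $\varphi_{i,y_{ij}}(a_{ij}) \ne \varphi_{j,y_{ij}}(a_{ij})$, then set $y := \bigoplus_{i<j} y_{ij}$ and let $a \in A(y) \simeq \bigoplus_{i<j} A(y_{ij})$ be the element with components $a_{ij}$ (using additivity of $A$). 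Naturality of the $\varphi_i$ with respect to the canonical projections $y \twoheadrightarrow y_{ij}$ forces the $\varphi_{i,y}(a)$ to be pairwise distinct, so evaluating the image of $\sum_i \lambda_i [\varphi_i]$ at $[a]\in k[A(y)]$ produces a vanishing linear combination $\sum_i \lambda_i [\varphi_{i,y}(a)] = 0$ of distinct basis elements of $k[B(y)]$; hence every $\lambda_i = 0$.

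\textbf{Representable case.} For $A = \A(x,-)$, additive Yoneda identifies $k[\Hom(A,B)] \simeq k[B(x)]$, while the adjunction between $k[-] : \mathbf{Ens} \to k\Md$ and the forgetful functor, combined with set-valued Yoneda, identifies $\Hom_{\F(\A;k)}(k[\A(x,-)], k[B])$ with $k[B(x)]$ as well; under these two identifications the canonical map becomes the identity on $k[B(x)]$, hence is an isomorphism.

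\textbf{Finite-type case, and the main obstacle.} Assume $A$ is of finite type and choose a surjection $\pi : \A(x,-) \twoheadrightarrow A$ (possible by finite generation), with kernel $R$. Given $\eta \in \Hom(k[A], k[B])$, the composite $\eta \circ k[\pi]$ corresponds via the representable case to some $\xi = \sum_j \mu_j [b_j] \in k[B(x)]$ with the $b_j$ pairwise distinct and the $\mu_j$ nonzero. The critical remaining step---which is the main obstacle---is to show each $b_j$ lies in $\Hom(A,B) \subset B(x)$, i.e.\ that its Yoneda lift $\psi_j : \A(x,-) \to B$ vanishes on $R$. For any $y_0 \in \A$ and $r_0 \in R(y_0)$, I exploit additivity of $\A$: setting $y := x \oplus y_0$, $f := \iota_1 \in \A(x,y)$ and $r := \iota_2 \circ r_0 \in R(y)$, the factorization of $\eta \circ k[\pi]$ through $k[\pi]$ translates via Yoneda into the identity $k[B(f)](\xi) = k[B(f+r)](\xi)$ inside $k[B(y)] = k[B(x) \oplus B(y_0)]$, which expands to $\sum_j \mu_j [(b_j, 0)] = \sum_j \mu_j [(b_j, B(r_0)(b_j))]$. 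Since the $b_j$ are distinct, matching coefficients of the basis element $(b_j, 0)$ forces $B(r_0)(b_j) = 0$ for every $j$. Each $\psi_j$ thus descends to an additive $\bar{\psi}_j : A \to B$, and the level-wise surjectivity of $k[\pi]$ yields $\eta = \sum_j \mu_j k[\bar{\psi}_j]$, placing $\eta$ in the image of the canonical map.
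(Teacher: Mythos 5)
Your proof is correct. Note that the paper does not actually prove this statement: it cites it verbatim from \cite[lemme~C.1]{DTV}, so there is no internal argument to compare against. Your self-contained proof follows what is essentially the standard route: injectivity by evaluating on a direct sum $y=\bigoplus_{i<j}y_{ij}$ chosen so that the $\varphi_{i,y}(a)$ become pairwise distinct basis elements of the free module $k[B(y)]$; the representable case by combining additive Yoneda with the free/forgetful adjunction; and the finite-type case by presenting $A$ as a quotient of a representable and checking that the support $\{b_j\}$ of the corresponding element of $k[B(x)]$ is killed by the relations. The one genuinely delicate point --- showing $B(r_0)(b_j)=0$ by comparing $k[B(\iota_1)](\xi)$ and $k[B(\iota_1+\iota_2 r_0)](\xi)$ in $k[B(x)\oplus B(y_0)]$ and matching coefficients of distinct basis elements --- is handled correctly, and it is exactly where the additivity of $\A$ and $B$ and the freeness of $k[-]$ are both needed. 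All steps check out.
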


\begin{rem} On déduit facilement de cette proposition que si $A, B : \A\to\mathbf{Ab}$ sont des foncteurs additifs à valeurs finies tels que $k[A]\simeq k[B]$, \textit{et que $A$ (ou $B$) est de type fini}, alors $A\simeq B$. Nous ignorons s'il est possible de s'affranchir de l'hypothèse que l'un des foncteurs $A, B$ est de type fini (et de l'hypothèse de valeurs finies).
\end{rem}

\begin{nota} Si $R$ est un anneau, on désigne par $R_\mu$ le monoïde multiplicatif sous-jacent à $R$.
\end{nota}

\begin{cor} Soit $A : \A\to\mathbf{Ab}$ un foncteur additif. Le morphisme de $k$-algèbres canonique
$$k[\End(A)_\mu]\to\End(k[A])$$
est injectif ; il est bijectif si $A$ est de type fini.
\end{cor}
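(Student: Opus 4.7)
Le plan est de déduire ce corollaire directement de la proposition précédente appliquée au cas particulier $B=A$. La proposition fournit déjà un morphisme $k$-linéaire $k[\Hom(A,A)]\to\Hom(k[A],k[A])$ qui est injectif, et bijectif si $A$ est de type fini. Comme $\End(A)=\Hom(A,A)$ et $\End(k[A])=\Hom(k[A],k[A])$, l'énoncé sur l'injectivité et la surjectivité comme applications $k$-linéaires est immédiat.

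Il reste seulement à vérifier la compatibilité avec les structures multiplicatives, c'est-à-dire que le morphisme est bien un morphisme de $k$-algèbres. Pour cela, je commencerais par remarquer que le morphisme canonique envoie l'élément $[f]$ (pour $f\in\End(A)$) sur la transformation naturelle $k[f]\in\End(k[A])$. La fonctorialité de $k[-] : \mathbf{Ens}\to k\Md$ donne $k[f\circ g]=k[f]\circ k[g]$ pour tous $f,g\in\End(A)$, ce qui établit la multiplicativité sur les générateurs $[f]$, $f\in\End(A)_\mu$. Le produit dans $k[\End(A)_\mu]$ et la composition dans $\End(k[A])$ étant tous deux $k$-bilinéaires, on en déduit la multiplicativité sur toute la $k$-algèbre. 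Le fait que l'unité $[\mathrm{id}_A]$ est envoyée sur $k[\mathrm{id}_A]=\mathrm{id}_{k[A]}$ est tout aussi immédiat.

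Il n'y a donc véritablement aucun obstacle dans cette démonstration : l'essentiel du travail a déjà été fait dans la proposition précédente, et la présente conclusion n'ajoute que la vérification (purement formelle) que la bijection $k$-linéaire déjà construite est compatible aux lois de composition. Je résumerais la preuve en une ou deux lignes renvoyant à la proposition précédente et observant que la fonctorialité de $k[-]$ entraîne trivialement la multiplicativité du morphisme considéré.
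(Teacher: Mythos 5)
Votre démonstration est correcte et suit exactement la voie attendue : le corollaire est le cas particulier $B=A$ de la proposition précédente (le $k$-module sous-jacent à $k[\End(A)_\mu]$ étant le $k$-module libre sur l'ensemble $\End(A)$), la multiplicativité découlant de la fonctorialité de $k[-]$. Le papier ne donne d'ailleurs aucune preuve, considérant l'énoncé comme une conséquence immédiate de la proposition, ce qui correspond à votre argument.
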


Le cas particulier suivant (qu'on peut établir encore plus directement par une simple invocation du lemme de Yoneda) nous sera utile par la suite.

\begin{cor}\label{cor-end_eval} Soit $N\in\mathbb{N}^*$. L'application linéaire
$$k[(\mathbb{Z}/N)_\mu]\to\End_{\F(\mathbf{Ab}^f_N;k)}(k[A])\qquad [a]\mapsto\big(k[V]\to k[V]\quad [v]\mapsto [av]\big)$$
est un isomorphisme de $k$-algèbres, où $A : \mathbf{Ab}^f_N\to\mathbf{Ab}$ est le foncteur d'oubli, dont l'inverse est induit par l'évaluation en $\mathbb{Z}/N$.
\end{cor}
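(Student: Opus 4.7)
Le plan consiste à exploiter le fait que, dans la catégorie $\mathbf{Ab}^f_N$, le foncteur d'oubli $A$ est représentable par $\mathbb{Z}/N$. Tout objet $V$ étant annulé par $N$, l'évaluation en la classe $\bar{1}$ établit en effet une bijection naturelle $\Hom_{\mathbf{Ab}^f_N}(\mathbb{Z}/N, V)\simeq V$, de sorte que $A$ s'identifie au foncteur représentable associé à $\mathbb{Z}/N$. La question se ramène ainsi au calcul des endomorphismes de la $k$-linéarisation d'un foncteur représentable, auquel le lemme de Yoneda s'applique directement.

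Concrètement, je procéderais en deux temps. D'abord, pour tout foncteur $G\in\F(\mathbf{Ab}^f_N; k)$, la propriété universelle du $k$-module libre combinée au lemme de Yoneda usuel fournit une bijection naturelle
$$\Hom_{\F(\mathbf{Ab}^f_N; k)}(k[A], G)\xrightarrow{\simeq} G(\mathbb{Z}/N)\;,$$
donnée par l'évaluation en $\mathbb{Z}/N$ puis en l'élément $[\bar{1}]\in k[A(\mathbb{Z}/N)]$. Spécialisée à $G=k[A]$, elle fournit un isomorphisme $k$-linéaire $\End(k[A])\simeq k[\mathbb{Z}/N]$ dont l'inverse est précisément l'\og évaluation en $\mathbb{Z}/N$\fg{} au sens de l'énoncé.

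Il reste à identifier les structures multiplicatives. Pour $a\in\mathbb{Z}/N$, la construction de l'iso de Yoneda associe à $[\bar{a}]$ la transformation naturelle qui en chaque $V$ envoie $[v]$ sur $[av]$, ce qui redonne la formule de l'énoncé ; la composée des transformations associées à $a$ et $b$ envoie alors $[v]$ sur $[abv]$, ce qui identifie effectivement l'algèbre cible à $k[(\mathbb{Z}/N)_\mu]$. La commutativité de l'anneau $\End(\mathbb{Z}/N)=\mathbb{Z}/N$ assure au passage qu'aucune algèbre opposée ne vient perturber cette identification. Toute la substance du résultat étant contenue dans le lemme de Yoneda, aucun réel obstacle n'est à prévoir ; le seul point requérant un peu d'attention est cette dernière vérification de compatibilité multiplicative.
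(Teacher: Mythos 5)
Votre démonstration est correcte et constitue précisément la « simple invocation du lemme de Yoneda » que l'article mentionne lui-même entre parenthèses comme voie encore plus directe ; la dérivation nominale du texte présente l'énoncé comme cas particulier du corollaire précédent sur $k[\End(A)_\mu]\to\End(k[A])$ pour $A$ de type fini, ce qui revient au même argument de représentabilité de $A$ par $\mathbb{Z}/N$. Votre vérification finale de la compatibilité multiplicative (et la remarque sur l'algèbre opposée, inoffensive ici par commutativité de $\End(\mathbb{Z}/N)$) est exactement le point qu'il fallait contrôler.
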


On rappelle que si $\C$ est une catégorie essentiellement petite, $T$ un foncteur de $\F(\C^\op;k)$ et $F$ un foncteur de $\F(\C;k)$, on définit le produit tensoriel de $T$ et $F$ au-dessus de $\C$ comme le $k$-module $T\underset{\C}{\otimes}F$ coend du bifoncteur produit tensoriel extérieur $T\boxtimes F : \C^\op\times\C\to k\Md\quad (x,y)\mapsto T(x)\otimes_k F(y)$ (cf. par exemple \cite[§\,2.2 et seq.]{DT-excis}) ; ce module est caractérisé par l'isomorphisme $k$-linéaire naturel
\begin{equation}\label{eq-ptcat}
\Hom_k(T\underset{\C}{\otimes}F,M)\simeq\Hom_{\F(\C;k)}(F,\Hom_k(-,M)\circ T)
\end{equation}
(où $M$ est un $k$-module).
Si $x$ est un objet de $\C$ et $t$ un élément du $k$-module $T(x)\otimes_k F(x)$, on note $\llbracket t\rrbracket$ l'image canonique de $t$ dans $T\underset{\C}{\otimes}F$.

\begin{pr}\label{pr-ptend} Soient $A : \A\to\mathbf{Ab}$ et $B : \A^\op\to\mathbf{Ab}$ des foncteurs additifs. Le morphisme
$$k^{B\underset{\A}{\otimes} A}\to\Hom(k[A],k^B)$$
associant à $\alpha : B\underset{\A}{\otimes} A\to k$ la transformation naturelle $k[A]\to B$ qui, évaluée sur un objet $x$ de $\A$, envoie $[u]$ (pour $u\in A(x)$) sur $(v\in B(x))\mapsto\alpha(\llbracket v\otimes u\rrbracket)$,
est bijectif.
\end{pr}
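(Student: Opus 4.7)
The plan is to apply the universal property \eqref{eq-ptcat} of the coend together with the free/forgetful adjunction $k[-]\dashv U$, where $U:k\Md\to\mathbf{Ens}$ is the forgetful functor.

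For every $x\in\A$ that adjunction pointwise identifies $k^{B(x)}=\Hom_{\mathbf{Ens}}(B(x),k)$ with $\Hom_k(k[B(x)],k)$; this identification is natural in $x\in\A^\op$ and thus produces an isomorphism of functors $k^B\simeq\Hom_k(-,k)\circ k[B]$ in $\F(\A;k)$. Substituting into \eqref{eq-ptcat} with $T:=k[B]$, $F:=k[A]$ and $M:=k$ yields the natural bijection
$$\Hom_k\bigl(k[B]\underset{\A}{\otimes}k[A],k\bigr)\;\xrightarrow{\;\sim\;}\;\Hom_{\F(\A;k)}(k[A],k^B).$$
Under the convention that $B\underset{\A}{\otimes}A$ denotes $k[B]\underset{\A}{\otimes}k[A]$ (so that $\llbracket v\otimes u\rrbracket$ is the image in this coend of the elementary tensor $[v]\otimes[u]\in k[B(x)]\otimes_k k[A(x)]$) and $k^{B\underset{\A}{\otimes}A}$ denotes $\Hom_k(B\underset{\A}{\otimes}A,k)$, this is precisely the bijection claimed.

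It remains to verify that the above abstract bijection coincides with the explicit map of the statement. Tracking the adjunction isomorphism $\Hom_k(k[E],k)\simeq k^E$ and the defining universal property of the coend through an elementary class, one finds that the natural transformation corresponding to $\alpha$ sends $[u]\in k[A(x)]$ to the function $v\mapsto\alpha(\llbracket v\otimes u\rrbracket)$, exactly as required. No essential difficulty arises: the whole argument is a formal consequence of \eqref{eq-ptcat} and the free/forgetful adjunction, and the additivity hypotheses on $A$ and $B$ play no role in this step.
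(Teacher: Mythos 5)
Your duality step is correct and reproduces one of the two ingredients of the paper's proof: combining $k^B\simeq\Hom_k(-,k)\circ k[B]$ with \eqref{eq-ptcat} (for $T=k[B]$, $F=k[A]$, $M=k$) does give a natural bijection $\Hom_{\F(\A;k)}(k[A],k^B)\simeq\Hom_k\bigl(k[B]\underset{\A}{\otimes}k[A],k\bigr)$. The gap lies in the sentence ``under the convention that $B\underset{\A}{\otimes}A$ denotes $k[B]\underset{\A}{\otimes}k[A]$'': that is not what $B\underset{\A}{\otimes}A$ means in the statement. There, $B\underset{\A}{\otimes}A$ is the coend of $(x,y)\mapsto B(x)\otimes_{\mathbb{Z}}A(y)$, an abelian group, and $k^{B\underset{\A}{\otimes}A}$ is the module of \emph{set-theoretic} maps from it to $k$; this is the reading used in the corollaire~\ref{cor-Phitr}, where $A^\sharp\underset{\A}{\otimes}A$ is computed to be $\mathbb{Q}/\mathbb{Z}$ via the $\mathbb{Z}$-linear co-Yoneda lemma. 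Since $k[-]$ preserves colimits, $k[B]\underset{\A}{\otimes}k[A]$ is instead isomorphic to $k[C]$, where $C$ is the coend of $(x,y)\mapsto B(x)\times A(y)$ computed in $\mathbf{Ens}$; so your argument identifies $\Hom(k[A],k^B)$ with $k^C$, not with $k^{B\underset{\A}{\otimes}A}$.

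What is missing is precisely the comparison between $C$ and $B\underset{\A}{\otimes}A$: the canonical map $C\to B\underset{\A}{\otimes}A$, $(v,u)\mapsto\llbracket v\otimes u\rrbracket$, is a bijection \emph{because $A$ and $B$ are additive}. (One puts a group law on $C$ by sending the classes of $(v,u)\in B(x)\times A(x)$ and $(v',u')\in B(y)\times A(y)$ to the class of $\big((v,v'),(u,u')\big)\in B(x\oplus y)\times A(x\oplus y)$; the bilinearity relations such as $\llbracket v\otimes(u+u')\rrbracket=\llbracket v\otimes u\rrbracket+\llbracket v\otimes u'\rrbracket$ then follow from $u+u'=A(\nabla)(u,u')$ and the coend relations.) This is the content of \cite[lemma~3.5]{DT-excis}, the second ingredient the paper invokes alongside \eqref{eq-ptcat}. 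Your closing remark that ``the additivity hypotheses on $A$ and $B$ play no role'' is therefore exactly where the argument breaks down: without these hypotheses the map $C\to B\underset{\A}{\otimes}A$ has no reason to be injective (already $X\times Y\to X\otimes_{\mathbb{Z}}Y$ is very far from injective for abelian groups $X,Y$), and the statement with the paper's meaning of $k^{B\underset{\A}{\otimes}A}$ would not follow.
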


\begin{proof} Cet isomorphisme se déduit directement de \cite[lemma~3.5]{DT-excis} et de l'isomorphisme de dualité \eqref{eq-ptcat}.
\end{proof}


Le cas particulier suivant de la proposition~\ref{pr-ptend} sert avant tout à fixer des notations que nous utiliserons ultérieurement.

\begin{cor}\label{cor-Phitr} Soit $\A$ l'une des catégories $\mathbf{Ab}^f$, $\mathbf{Ab}^f_N$ (où $N\in\mathbb{N}^*$) ou $\mathbf{Ab}^f_{(p)}$. On note $A : \A\to\mathbf{Ab}$ le foncteur d'inclusion et $T$ le groupe $\mathbb{Q}/\mathbb{Z}$ (pour $\A=\mathbf{Ab}^f$), $\mathbb{Z}/N$ (pour $\A=\mathbf{Ab}^f_N$) ou $\mathbb{Z}/p^\infty$ (pour $\A=\mathbf{Ab}^f_{(p)}$). Alors l'application $k$-linéaire
$$\Phi : k^T\to\Hom_{\F(\A;k)}(k[A],k^{A^\sharp})$$
donnée par
$$\Phi(\alpha)_V([v]):=(l\in V^\sharp\mapsto\alpha(<v,l>))$$
pour tous $V\in\A$, $v\in V$ et toute fonction $\alpha : T\to k$, est un isomorphisme.
\end{cor}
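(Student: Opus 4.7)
The plan is to derive this corollary from Proposition~\ref{pr-ptend} (applied with $B = A^\sharp$) by explicitly computing the coend $A^\sharp \underset{\A}{\otimes} A$. I claim that the canonical evaluation pairing $V^\sharp \otimes V \to T$, $l \otimes v \mapsto l(v) = <v,l>$, which is natural in $V \in \A$ and lands in $T$ in each of the three cases (since elements of $V^\sharp$ take values in the cyclic subgroup of $\mathbb{Q}/\mathbb{Z}$ of order dividing the exponent of $V$), induces an \emph{isomorphism} of abelian groups $e : A^\sharp \underset{\A}{\otimes} A \xrightarrow{\sim} T$. Granted this, the composition $k^T \xrightarrow{\sim} k^{A^\sharp \underset{\A}{\otimes} A} \xrightarrow{\sim} \Hom_{\F(\A;k)}(k[A], k^{A^\sharp})$ of the isomorphism induced by $e$ with the one of Proposition~\ref{pr-ptend} is, by direct unpacking of formulas, exactly the map $\Phi$.

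For surjectivity of $e$: given $t \in T$, let $n$ denote its order in $\mathbb{Q}/\mathbb{Z}$. In each of the three cases, the cyclic group $V := \mathbb{Z}/n$ lies in $\A$ (for $\A = \mathbf{Ab}^f_N$, one has $n \mid N$; for $\A = \mathbf{Ab}^f_{(p)}$, $n$ is a power of $p$). Taking $v := 1 \in V$ and the character $l : V \to T$ sending $1$ to $t$, one obtains $e(\llbracket l \otimes v \rrbracket) = t$.

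Injectivity of $e$ is the main point. Any element $\llbracket l \otimes v \rrbracket$ with $v \in V$ of order $m$ can be rewritten, using the morphism $\iota : \mathbb{Z}/m \to V$ sending $1$ to $v$ and the coend relation, as $\llbracket l \circ \iota \otimes 1 \rrbracket$, an element supported on the cyclic group $\mathbb{Z}/m$ (which lies in $\A$, since $m$ divides the exponent of $V$). A finite sum in the coend can therefore be rewritten as $\sum_i \llbracket l'_i \otimes 1 \rrbracket$ with $l'_i \in (\mathbb{Z}/m_i)^\sharp$, and then consolidated on a single cyclic group $\mathbb{Z}/n$ (with $n := \operatorname{lcm}(m_i)$, still in $\A$) via the reductions $\pi_i : \mathbb{Z}/n \twoheadrightarrow \mathbb{Z}/m_i$ and the coend relations $\llbracket \pi_i^*(l'_i) \otimes 1 \rrbracket = \llbracket l'_i \otimes \pi_i(1) \rrbracket = \llbracket l'_i \otimes 1 \rrbracket$, yielding $\llbracket L \otimes 1 \rrbracket$ with $L := \sum_i \pi_i^*(l'_i) \in (\mathbb{Z}/n)^\sharp$. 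Its image under $e$ equals $L(1) = \sum_i l'_i(1)$; since the evaluation pairing $(\mathbb{Z}/n)^\sharp \otimes \mathbb{Z}/n \to \mathbb{Z}/n \subseteq \mathbb{Q}/\mathbb{Z}$ is a perfect pairing, its vanishing forces $L = 0$, hence $\llbracket L \otimes 1 \rrbracket = 0$.

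The difficulty lies precisely in this injectivity step: the evaluation map $V^\sharp \otimes V \to T$ fails to be injective for non-cyclic $V$ (for instance on $V = (\mathbb{Z}/n)^2$), so the proof crucially depends on exploiting the coend relations to reduce every element to a representative on a cyclic group, where evaluation does become a perfect pairing.
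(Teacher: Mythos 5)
Your proof is correct and follows the same strategy as the paper: both reduce the corollary to Proposition~\ref{pr-ptend} applied with $B=A^\sharp$ and then identify the coend $A^\sharp\underset{\A}{\otimes}A$ with $T$. The only difference lies in how that identification is carried out: the paper writes $A$ as the filtered colimit, over $n$ ordered by divisibility, of the (co)representable functors $\Hom(\mathbb{Z}/n,-)$ and invokes the co-Yoneda lemma to obtain $A^\sharp\underset{\A}{\otimes}A\simeq\underset{n}{\col}(\mathbb{Z}/n)^\sharp\simeq T$ in one line, whereas you verify by hand, using the coend relations, that every element reduces to one supported on a cyclic group where evaluation is a perfect pairing --- which is precisely the element-level unpacking of that same computation.
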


\begin{proof} Contentons-nous de traiter le cas $\A=\mathbf{Ab}^f$, les autres étant analogues (et plus simple pour $\A=\mathbf{Ab}^f_N$). Le foncteur $A$ est alors la colimite sur $n\in\mathbb{N}^*$ (pour l'ordre de divisibilité) des foncteurs $\Hom(\mathbb{Z}/n,-)$, d'où $A^\sharp\underset{\A}{\otimes} A\simeq\underset{n\in\mathbb{N}^*}{\col}A^\sharp(\mathbb{Z}/n)\simeq\underset{n\in\mathbb{N}^*}{\col}\mathbb{Z}/n\simeq\mathbb{Q}/\mathbb{Z}$. La proposition~\ref{pr-ptend} permet de conclure.
\end{proof}

La proposition~\ref{pr-invF} montre que si le foncteur d'inclusion $A$ est $k$-trivial (i.e. que $k$ contient $\mathbb{Q}$ si $\A=\mathbf{Ab}^f$, $N$ est inversible dans $k$ si $\A=\mathbf{Ab}^f_N$, ou $p$ est inversible dans $k$ si $\A=\mathbf{Ab}^f_{(p)}$) et que $\varepsilon : T\to k^\times$ est un monomorphisme de groupes dont l'image est constituée de racines primitives de l'unité, alors $\Phi(\varepsilon)$ (où l'on note par abus encore $\varepsilon$ la fonction ensembliste composée de ce morphisme et de l'inclusion $k^\times\to k$) est un isomorphisme.

\subsection{Résultat principal : énoncé ; premières réductions}

Le théorème suivant constitue le principal résultat de cet article (et le seul nouveau, avec certains énoncés du §\,\ref{ssect-vi}) ; il sera démontré un peu plus tard. Il généralise la proposition~3.3 de \cite{DG} (qui joue un rôle très important dans ledit article\,\footnote{On notera que, dans \cite{DG}, le fait que l'isomorphisme $k[A]\simeq k^{A^\sharp}$ pour $A$ foncteur additif $k$-trivial ne soit montré que sous des hypothèses supplémentaires sur $A$ ou sur $k$ n'a pas d'incidence sur la généralité des résultats ultérieurs, qui traitent le plus souvent de foncteurs $A$ de type fini, et pour lesquels, de plus, l'extension des scalaires au but ne pose pas de problème particulier.} --- et donc sur la structure des foncteurs \textit{antipolynomiaux}, voire plus généraux --- cf. \cite[§\,4.1 et 4.2]{DTV}) en supprimant l'hypothèse que $k$ est un corps et surtout celle que $A$ est un foncteur de type fini (ou celle que $k$ contient assez de racines de l'unité --- cf.  \cite[rem.~3.5]{DG}), ce qui complique assez nettement la démonstration, même si celle-ci demeure élémentaire.

\begin{thm}\label{thprinc} Soit $A : \A\to\mathbf{Ab}$ un foncteur additif $k$-trivial. Alors il existe dans $\F(\A;k)$ un isomorphisme $k[A]\simeq k^{A^\sharp}$.
\end{thm}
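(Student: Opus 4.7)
The plan is to reduce the theorem to an explicit construction over a single prime and then build the required natural transformation by means of Gauss sums.

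As a first step, since morphisms of finite abelian groups respect the $p$-primary decomposition, the functor $A$ splits as $A=\bigoplus_p A_p$, where each $A_p$ takes values in finite abelian $p$-groups and, by $k$-triviality, only primes $p$ invertible in $k$ can contribute. For $V=\bigoplus_p V_p$ finite, one has natural identifications $k[V]\simeq\bigotimes_p k[V_p]$ and $k^{V^\sharp}\simeq\bigotimes_p k^{V_p^\sharp}$, with only finitely many nontrivial tensor factors at each object of $\A$, so that natural isomorphisms $k[A_p]\simeq k^{A_p^\sharp}$ assemble pointwise into one for $A$. This reduces the problem to a single prime $p$. Moreover, since $A$ then factors through the inclusion $\iota : \mathbf{Ab}^f_{(p)}\hookrightarrow\mathbf{Ab}$, and any natural isomorphism $k[\iota]\simeq k^{\iota^\sharp}$ in $\F(\mathbf{Ab}^f_{(p)};k)$ pulls back along $A$, it suffices to treat the universal case $A=\iota$.

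By Corollary~\ref{cor-Phitr}, natural transformations $k[\iota]\to k^{\iota^\sharp}$ are in bijection, via $\Phi$, with functions $\alpha : \mathbb{Z}/p^\infty\to k$, the corresponding morphism on $V\in\mathbf{Ab}^f_{(p)}$ sending $[v]$ to $(l\mapsto\alpha(<v,l>))$. The task is to exhibit $\alpha$ such that the ``character matrix'' $M_V:=\big(\alpha(<v,l>)\big)_{v\in V,\,l\in V^\sharp}$ is invertible over $k$ for every $V$. When $k$ has enough roots of unity, one picks a group morphism $\varepsilon:\mathbb{Z}/p^\infty\to k^\times$ sending $1/p^n$ to a primitive $p^n$-th root of unity for every $n$, and invertibility follows from Fourier inversion (Proposition~\ref{pr-invF}); in general, however, $k$ need not contain any such roots, so $\alpha$ must be constructed by descent.

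The construction is carried out a priori in the faithfully flat extension $k':=k\otimes_\mathbb{Z}\zcyc$, in which the canonical $\varepsilon$ does exist. The idea is to define $\alpha$ as a Gau\ss-sum-weighted linear combination of translates of $\varepsilon$ indexed by Dirichlet characters modulo $p^n$, arranged so that each value is Galois-invariant and therefore descends to $k$, while $M_V$ remains invertible after tensoring with $k'$ (and hence, by faithful flatness, over $k$). The nonvanishing input comes from Proposition~\ref{pr-gprim} (Gau\ss\ sums of primitive characters against primitive roots lie in $k^\times$), together with Propositions~\ref{pr-gimp} and~\ref{pr-gimp2} to control the imprimitive contributions and to ensure compatibility between the different levels $\mathbb{Z}/p^n$ as $n$ varies, so that the $\alpha_n:\mathbb{Z}/p^n\to k$ assemble into a single function on $\mathbb{Z}/p^\infty$.

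The main obstacle, I expect, is to verify that a \emph{single} $\alpha$ makes $M_V$ invertible for \emph{all} finite $p$-groups $V$ simultaneously, not merely for cyclic $V=\mathbb{Z}/p^n$ where the determinant computation reduces directly to Gau\ss\ sums on $(\mathbb{Z}/p^n)^\times$, but also for arbitrary products $V=\bigoplus_i\mathbb{Z}/p^{n_i}$, for which $M_V$ is not obviously a tensor product unless $\alpha$ happens to be multiplicative. Bridging this gap --- presumably by an induction on $n$ exploiting Proposition~\ref{pr-gimp2} together with the block structure of $M_V$ along the filtration $V[p]\subset V[p^2]\subset\cdots$ --- is the heart of the argument.
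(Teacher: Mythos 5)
Your reductions (primary decomposition, base change, passage to the universal case $\iota:\mathbf{Ab}^f_{(p)}\hookrightarrow\mathbf{Ab}$, and the parametrisation of candidate morphisms by functions $\alpha:\mathbb{Z}/p^\infty\to k$ via le corollaire~\ref{cor-Phitr}) match the paper's, but the proposal stops exactly where the proof has to begin, and you say so yourself: both the construction of $\alpha$ and the verification that a single $\alpha$ works for \emph{all} finite abelian $p$-groups $V$ are left as announced intentions. The second point is the genuine gap. Your suggested route --- an induction on the filtration $V[p]\subset V[p^2]\subset\cdots$ and a block analysis of the character matrix $M_V$ --- is not obviously workable, precisely because $M_V$ is not a tensor product for a non-multiplicative $\alpha$. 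The paper's key idea is to never look at $M_V$ for non-cyclic $V$ at all: on $\mathbf{Ab}^f_{p^r}$ the forgetful functor is representable by $\mathbb{Z}/p^r$, so by Yoneda (corollaire~\ref{cor-end_eval}) one has $\End(k[A])\simeq k[(\mathbb{Z}/p^r)_\mu]$, and the endomorphism $\Phi(\varepsilon)^{-1}\circ\Phi(\alpha)$ is invertible \emph{as a natural transformation} if and only if a single explicit element of this monoid algebra is a unit. That unit condition is then unwound (lemmes~\ref{lm-invloc} and~\ref{lm-invag}) into character sums, which is where the Gau\ss\ sums of propositions~\ref{pr-gprim}, \ref{pr-gimp} and~\ref{pr-gimp2} enter. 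Without some such device, ``invertible at every $V$'' is not reduced to a checkable condition, and the proof does not close.

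The other deviation is your construction of $\alpha$ by Galois descent from $k\otimes_\mathbb{Z}\zcyc$, as a Gau\ss-sum-weighted combination of translates of $\varepsilon$. This is both unexecuted and unnecessary: the paper takes $\alpha$ with values in $\{1,2\}\subset\mathbb{Z}$ (the value $2$ on the elements $1/p^r$, the value $1$ elsewhere, cf.\ corollaire~\ref{cor-tpzc}), so there is nothing to descend at the level of $\alpha$; the extension $\zcyc[1/p]$ of $\mathbb{Z}[1/p]$ is used only to \emph{test} invertibility, and the descent needed there is plain faithfully flat base change (lemme~\ref{lm-cab}), with no Galois invariance argument. If you pursue your version, you would still have to prove that your Galois-averaged $\alpha$ satisfies the unit conditions, which is not easier than checking them for the constant-plus-indicator function the paper uses.
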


Avant de donner quelques réductions simples préalables à la démonstration de ce théorème, introduisons une notation, dans laquelle L abrège \textit{linéarisation} et D \textit{dual}.

\begin{nota} On dit qu'un foncteur additif $A : \A\to\mathbf{Ab}$ vérifie la propriété $(\mathrm{LD})_k$ si $k[A]\simeq k^{A^\sharp}$ dans $\F(\A;k)$.
\end{nota}

\begin{lm}\label{lm-sdi} Soit $A_i : \A\to\mathbf{Ab}$ ($i\in E$) une famille de foncteurs additifs. On suppose que, pour tout objet $t$ de $\A$, le sous-ensemble $\{i\in E\,|\,A_i(t)\ne 0\}$ de $E$ est fini. 

Si chaque $A_i$ vérifie la propriété $(\mathrm{LD})_k$, alors $\bigoplus_{i\in E} A_i$ la vérifie également.
\end{lm}

\begin{proof} L'hypothèse de finitude faite garantit que
\begin{equation}\label{eq-dsdi}\Big(\bigoplus_{i\in E} A_i\Big)^\sharp\simeq\bigoplus_{i\in E} A_i^\sharp\;.
\end{equation}
On a par ailleurs
\begin{equation}\label{eq-ptil}k[\bigoplus_{i\in E} A_i]\simeq\bigotimes_{i\in E} k[A_i]\,,
\end{equation}
où le produit tensoriel signifie la colimite (filtrante) sur les parties finies $E'$ de $E$ des $\bigotimes_{i\in E'} k[A_i]$, les applications de transition, pour $E'\subset E''$, étant obtenues par produit tensoriel des identités de $k[A_i]$ pour $i\in E'$ et de l'unité $k\to k[A_i]$ pour $i\in E''\setminus E'$.

On a de même, en utilisant \eqref{eq-dsdi},
\begin{equation}\label{eq-ptdl}
k^{\big(\bigoplus_{i\in E} A_i\big)^\sharp}\simeq\bigotimes_{i\in E} k^{A_i^\sharp}
\end{equation}
ce qui permet de conclure, car si $A$ vérifie la condition $(\mathrm{LD})_k$, le scindement naturel des parties constantes des foncteurs montre qu'on peut trouver un isomorphisme $k[A]\simeq k^{A^\sharp}$ faisant commuter le diagramme
$$\xymatrix{k\ar[r]\ar[rd] & k[A]\ar[d]^\simeq \\
& k^{A^\sharp}
}$$
dont les flèches de source $k$ sont les unités. La conclusion découle donc de \eqref{eq-ptil} et \eqref{eq-ptdl}.
\end{proof}

\begin{lm}\label{lm-escal} Soient $k'\to k$ un morphisme d'anneaux commutatifs et $A : \A\to\mathbf{Ab}$ un foncteur additif à valeurs finies. Si $A$ vérifie la condition $(\mathrm{LD})_{k'}$, alors il vérifie $(\mathrm{LD})_k$.
\end{lm}

\begin{proof} Dans $\F(\A;k)$, on a $k'[A]\otimes_{k'}k\simeq k[A]$ et, comme $A$ est à valeurs finies, $(k')^{A^\sharp}\otimes_{k'}k\simeq k^{A^\sharp}$ Le lemme découle donc d'une extension des scalaires au but.
\end{proof}

\begin{lm}\label{lm-cauni} Si, pour tout nombre premier $p$, le foncteur d'inclusion $\mathbf{Ab}^f_{(p)}\to\mathbf{Ab}$ vérifie la propriété $(\mathrm{LD})_{\mathbb{Z}[1/p]}$, alors le théorème~\ref{thprinc} est vrai.
%
\end{lm}

\begin{proof} Par précomposition, l'hypothèse implique tout foncteur additif $\A\to\mathbf{Ab}$ à valeurs dans $\mathbf{Ab}^f_{(p)}$ vérifie $(\mathrm{LD})_{\mathbb{Z}[1/p]}$, et donc aussi $(\mathrm{LD})_k$ si $p$ est inversible dans $k$, par le lemme~\ref{lm-escal}.

 Le cas général d'un foncteur additif $k$-trivial $A : \A\to\mathbf{Ab}$ s'en déduit grâce à la décomposition primaire de $A$ et au lemme~\ref{lm-sdi}, en notant que la condition $k\otimes_\mathbb{Z} A=0$ implique que $p$ est inversible dans $k$ pour tout nombre premier $p$ tel que la composante $p$-primaire de $A$ soit non nulle.
\end{proof}


L'énoncé suivant reprend les notations du corollaire~\ref{cor-Phitr}. 

\begin{lm}\label{lm-cab} Soit $A : \mathbf{Ab}^f_{(p)}\to\mathbf{Ab}$ le foncteur d'oubli. Soit $K$ une $k$-algèbre (unitaire, associative et) commutative. Supposons que $K$ est un $k$-module libre non nul. Soit $\alpha : T\to k$ une fonction ensembliste et $\tilde{\alpha}$ sa composée avec le morphisme canonique $k\to K$. Alors le morphisme $\Phi(\alpha) : k[A]\to k^{A^\sharp}$ de $\F(\mathbf{Ab}^f_{(p)};k)$ est un isomorphisme si et seulement si le morphisme $\Phi(\tilde{\alpha}) : K[A]\to K^{A^\sharp}$ de $\F(\mathbf{Ab}^f_{(p)};K)$ est un isomorphisme
\end{lm}

\begin{proof} Comme $A$ est à valeurs finies, $\Phi(\tilde{\alpha})$ s'identifie à $\Phi(\alpha)\otimes_k K$, la conclusion résulte donc de ce que $K$ est fidèlement plat sur $k$, puisque libre et non nul.
\end{proof}

\subsection{Résultat principal : démonstration}

\begin{lm}\label{lm-invloc} Soit $N$ une puissance d'un nombre premier $p$. Un élément de $k[(\mathbb{Z}/N)_\mu]$ est inversible si et seulement si son image dans $k\times k[(\mathbb{Z}/N)^\times]$ par le morphisme d'anneaux dont les composantes sont l'augmentation $k[(\mathbb{Z}/N)_\mu]\to k$ et le morphisme $k[(\mathbb{Z}/N)_\mu]\to k[(\mathbb{Z}/N)^\times]$ donné par $[t]\mapsto [t]$ pour $t\in(\mathbb{Z}/N)^\times$ et $[t]\mapsto 0$ pour $t\notin(\mathbb{Z}/N)^\times$ est inversible.
\end{lm}

\begin{proof} En effet, le noyau de ce morphisme est l'idéal engendré par $[p]-[0]$, qui est nilpotent.
\end{proof}

\begin{lm}\label{lm-invag} Supposons que $k$ est un sous-anneau de $\mathbb{C}$ contenant $\zcyc$. Soit $G$ un groupe abélien fini. Un élément $\sum_{g\in G}c_g[g]$ de l'algèbre $k[G]$ est inversible si et seulement si, pour tout morphisme de groupes $\chi : G\to k^\times$, l'élément $\sum_{g\in G}c_g\chi(g)$ de $k$ est inversible.
\end{lm}

\begin{proof} Notons $\eta : k[G]\to k$ l'augmentation et, pour $\chi\in\mathbf{Ab}(G,k^\times)$, $\tilde{\chi}$ l'automorphisme de la $k$-algèbre $k[G]$ envoyant $[g]$ sur $\chi(g)[g]$ pour tout $g\in G$. Le morphisme d'algèbres $\eta\circ\tilde{\chi} : k[G]\to k$ est donné par  $\sum_{g\in G}c_g[g]\mapsto\sum_{g\in G}c_g\chi(g)$, ce qui montre que $\sum_{g\in G}c_g\chi(g)$ est nécessairement inversible dans $k$ si $\sum_{g\in G}c_g[g]$ est inversible dans $k[G]$.

Comme $k$ contient $\zcyc$, $\mathbf{Ab}(G,k^\times)\simeq G^\sharp$, et $k$ est exactement la sous-algèbre de $k[G]$ des éléments invariants par tous les $\tilde{\chi}$. Comme de plus $\widetilde{\chi_1}\circ\widetilde{\chi_2}=\widetilde{\chi_1\chi_2}$, il s'ensuit que, pour tout $x\in k[G]$,
$$\prod_{\chi\in\mathbf{Ab}(G,k^\times)}\tilde{\chi}(x)\in k,\text{ donc }\prod_{\chi\in\mathbf{Ab}(G,k^\times)}\tilde{\chi}(x)=\prod_{\chi\in\mathbf{Ab}(G,k^\times)}(\eta\circ\tilde{\chi})(x)\;.$$
Ainsi, si tous les $(\eta\circ\tilde{\chi})(x)$ sont inversibles dans $k$, tous les $\tilde{\chi}(x)$, et en particulier $x$, sont inversibles dans $k[G]$.
\end{proof}

\begin{rem} Lorsque $|G|$ est inversible dans $k$, le lemme résulte directement de l'inversion de Fourier.
\end{rem}

L'énoncé suivant reprend les notations du corollaire~\ref{cor-Phitr}, ainsi que celles du §\,\ref{par-Fou} ; la transformation de Fourier est relative au monomorphisme de groupes canonique $\varepsilon : \mathbb{Z}/N\to\zcyc[1/N]\quad\bar{t}\mapsto\exp(\frac{2\imath\pi t}{N})$ (cf. notation~\ref{not-eps}). On identifie de plus le groupe $\mathbb{Z}/N$ à son dual de la manière usuelle, en associant à $x\in\mathbb{Z}/N$ la multiplication par $x$ ; ainsi, la transformée de Fourier d'une fonction $\mathbb{Z}/N\to\zcyc[1/N]$ est une fonction $\mathbb{Z}/N\to\zcyc[1/N]$.

\begin{pr}\label{pr-inv1} Soit $N$ une puissance d'un nombre premier ; supposons que $k=\zcyc[1/N]$. Soit $\alpha : \mathbb{Z}/N\to k$ une fonction ensembliste. Alors $\Phi(\alpha) : k[A]\to k^{A^\sharp}$ est un isomorphisme si et seulement si $\alpha(0)\in k^\times$ et que, pour tout morphisme de groupes $\chi : (\mathbb{Z}/N)^\times\to k^\times$, on a
$$\sum_{t\in (\mathbb{Z}/N)^\times}\chi(t)\hat{\alpha}(t)\in k^\times\;.$$
\end{pr}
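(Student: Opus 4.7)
Le plan est de ramener l'inversibilité de $\Phi(\alpha)$ à celle d'un élément explicite de l'algèbre de monoïde $k[(\mathbb{Z}/N)_\mu]$, en exploitant l'inversion de Fourier disponible dans $k=\zcyc[1/N]$ et le corollaire~\ref{cor-end_eval}.

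Je commencerais par observer que $\exp(2\imath\pi/N)$ est une racine primitive $N$-ième de l'unité dans $k$, de sorte que la proposition~\ref{pr-invF} s'applique au morphisme $\varepsilon$ de la notation~\ref{not-eps} et entraîne que $\Phi(\varepsilon) : k[A]\to k^{A^\sharp}$ est un isomorphisme de foncteurs dans $\F(\mathbf{Ab}^f_N;k)$, d'inverse explicite $\Psi_\varepsilon$ donné par la transformée de Fourier. Par composition, $\Phi(\alpha)$ est un isomorphisme si et seulement si l'endomorphisme naturel $\Theta:=\Psi_\varepsilon\circ\Phi(\alpha)$ de $k[A]$ est un automorphisme.

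Ensuite, j'utiliserais le corollaire~\ref{cor-end_eval} pour identifier $\End_{\F(\mathbf{Ab}^f_N;k)}(k[A])$ à $k[(\mathbb{Z}/N)_\mu]$ via l'évaluation en $\mathbb{Z}/N$, un élément $\sum_a c_a[a]$ y correspondant à la transformation naturelle $[v]\mapsto\sum_a c_a[av]$. Un calcul direct --- en appliquant $\Theta_{\mathbb{Z}/N}$ à $[1]\in k[\mathbb{Z}/N]$ et en utilisant la formule explicite de la transformée de Fourier après identification canonique $(\mathbb{Z}/N)^\sharp\simeq\mathbb{Z}/N$ --- identifie alors $\Theta$ à l'élément $\beta:=\sum_{t\in\mathbb{Z}/N}\hat{\alpha}(t)[t]$ de $k[(\mathbb{Z}/N)_\mu]$. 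Comme $N$ est une puissance d'un nombre premier, le lemme~\ref{lm-invloc} décompose l'inversibilité de $\beta$ en deux conditions indépendantes : l'inversibilité de son augmentation $\sum_t\hat{\alpha}(t)$ dans $k$, et celle de sa projection $\sum_{t\in(\mathbb{Z}/N)^\times}\hat{\alpha}(t)[t]$ dans $k[(\mathbb{Z}/N)^\times]$.

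Il reste deux vérifications élémentaires. Pour la première condition, une permutation de sommations donne $\sum_t\hat{\alpha}(t)=\frac{1}{N}\sum_s\alpha(s)\sum_t\varepsilon(-st)=\alpha(0)$, la somme intérieure étant nulle dès que $s\ne 0$. Pour la seconde, le lemme~\ref{lm-invag}, applicable puisque $k\supset\zcyc$, reformule exactement l'inversibilité de $\sum_{t\in(\mathbb{Z}/N)^\times}\hat{\alpha}(t)[t]$ dans $k[(\mathbb{Z}/N)^\times]$ en demandant que $\sum_{t\in(\mathbb{Z}/N)^\times}\chi(t)\hat{\alpha}(t)\in k^\times$ pour tout caractère $\chi:(\mathbb{Z}/N)^\times\to k^\times$. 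Le pas principal, et le plus calculatoire, est l'identification $\Theta\leftrightarrow\beta$ : tout le reste n'est qu'une concaténation des lemmes préparatoires.
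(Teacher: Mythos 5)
Votre démonstration est correcte et suit essentiellement la même démarche que celle de l'article : réduction à l'inversibilité de $\sum_{t}\hat{\alpha}(t)[t]$ dans $k[(\mathbb{Z}/N)_\mu]$ via la proposition~\ref{pr-invF} et le corollaire~\ref{cor-end_eval}, puis décomposition par le lemme~\ref{lm-invloc} et conclusion par le lemme~\ref{lm-invag}, l'identification $\sum_t\hat{\alpha}(t)=\alpha(0)$ étant la même. Rien à signaler.
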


\begin{proof} Comme $\Phi(\varepsilon) : k[A]\to k^{A^\sharp}$ est un isomorphisme (proposition~\ref{pr-invF}), il s'agit donc de voir quand l'endomorphisme $\Phi(\varepsilon)^{-1}\circ\Phi(\alpha)$ de $k[A]$ est automorphisme, ou encore, par le corollaire~\ref{cor-end_eval}, quand son évaluation sur $\mathbb{Z}/N$ est un automorphisme $k$-linéaire de $k[\mathbb{Z}/N]$, i.e. envoie $[\bar{1}]$ sur un élément inversible de l'algèbre $k[(\mathbb{Z}/N)_\mu]$.

De manière générale, si $V$ est un objet de $\mathbf{Ab}^f_N$ et $v$ un élément de $V$,  $\Phi(\varepsilon)^{-1}\circ\Phi(\alpha)$ est donné sur $V$ par
\begin{equation}\label{eq-fct_fond}
[v]\mapsto\frac{1}{|V|}\underset{l\in V^\sharp}{\sum_{u\in V}}\alpha(l(v))\varepsilon(-l(u)) [u]=\frac{1}{|V|}{\sum_{u\in V}}\Big(\sum_{l\in V^\sharp}\alpha(l(v))\varepsilon(-l(u))\Big)[u]\;.
\end{equation}
Lorsque $V=\mathbb{Z}/N$ et $v=\bar{1}$, on a
$$\sum_{l\in V^\sharp}\alpha(l(v))\varepsilon(-l(u))=\sum_{x\in\mathbb{Z}/N}\alpha(x)\varepsilon(-xu)=N\hat{\alpha}(u)\;;$$
ainsi, $\Phi(\alpha)$ est un isomorphisme si et seulement si l'élément ${\sum_{u\in\mathbb{Z}/N}}\hat{\alpha}(u)[u]$ de l'algèbre $k[(\mathbb{Z}/N)_\mu]$ est inversible. Par le lemme~\ref{lm-invloc}, cela équivaut à demander que l'élément ${\sum_{u\in (\mathbb{Z}/N)^\times}}\hat{\alpha}(u)[u]$ de l'algèbre $k[(\mathbb{Z}/N)à^\times]$ soit inversible, ainsi que l'élément ${\sum_{u\in\mathbb{Z}/N}}\hat{\alpha}(u)=\alpha(0)$ (cf. proposition~\ref{pr-invF}) de $k$. Le lemme~\ref{lm-invag} permet de conclure.
\end{proof}

Pour étudier la condition d'inversibilité qui apparaît dans la proposition~\ref{pr-inv1}, on note que la définition de la transformée de Fourier et une interversion de sommations permettent de faire apparaître une somme de Gau\ss\ (cf. \eqref{eq-dfgs}) :
\begin{equation}\label{eq-avecG}
\sum_{t\in (\mathbb{Z}/N)^\times}\chi(t)\hat{\alpha}(t)=\frac{1}{N}\sum_{u\in\mathbb{Z}/N}\G_N(\chi,\varepsilon_u)\alpha(u)\;.
\end{equation}

La suite de l'analyse distingue selon que le caractère $\chi$ est primitif ou imprimitif.

\begin{lm}\label{lm-prim} Soit $N\in\mathbb{N}^*$ ; supposons que $k=\zcyc[1/N]$. Soient $\chi : (\mathbb{Z}/N)^\times\to k^\times$ un caractère primitif et $\alpha : \mathbb{Z}/N\to k$ une fonction ensembliste. Alors
$$\left(\sum_{t\in (\mathbb{Z}/N)^\times}\chi(t)\hat{\alpha}(t)\in k^\times\right)\Leftrightarrow\left(\sum_{u\in (\mathbb{Z}/N)^\times}\chi(u)^{-1}\alpha(u)\in k^\times\right)\;.$$
\end{lm}

\begin{proof} Cela résulte de la formule \eqref{eq-avecG} et de la proposition~\ref{pr-gprim}.
\end{proof}

\begin{lm}\label{lm-imprim} Soit $N=p^r$ avec $r\in\mathbb{N}^*$ ; supposons que $k=\zcyc[1/N]$. Soient $\chi : (\mathbb{Z}/N)^\times\to k^\times$ un caractère imprimitif et $\alpha : \mathbb{Z}/N\to k$ une fonction ensembliste. 
\begin{enumerate}
\item Si $r=1$, alors
$$\left(\sum_{t\in (\mathbb{Z}/N)^\times}\chi(t)\hat{\alpha}(t)\in k^\times\right)\Leftrightarrow\left(\sum_{u\in (\mathbb{Z}/p)^\times}\big(\alpha(u)-\alpha(0)\big)\in k^\times\right)\;.$$
\item Si $r>1$, notons $\bar{\alpha}$ la fonction composée du monomorphisme $\mathbb{Z}/p^{r-1}\hookrightarrow\mathbb{Z}/p^r$ et de $\alpha$, et $\bar{\chi} : (\mathbb{Z}/p^{r-1})^\times\to k^\times$ le caractère induit par $\chi$. Alors
$$\left(\sum_{t\in (\mathbb{Z}/N)^\times}\chi(t)\hat{\alpha}(t)\in k^\times\right)\Leftrightarrow\left(\sum_{t\in (\mathbb{Z}/p^{r-1})^\times}\bar{\chi}(t)\hat{\bar{\alpha}}(t)\in k^\times\right)\;.$$
\end{enumerate}
\end{lm}

\begin{proof} Si $N=p$ est premier, le seul caractère imprimitif $\chi$ modulo $N$ est le caractère trivial. On a alors $\G_p(\chi,\varepsilon_u)=\sum_{t\in (\mathbb{Z}/p)^\times}\varepsilon_u(t)$, qui vaut $-1$ si $u$ et $p$ sont étrangers (car $\sum_{t\in\mathbb{Z}/p}\varepsilon_u(t)=0$), et $p-1$ sinon. La première assertion résulte donc de \eqref{eq-avecG}.

La deuxième assertion découle quant à elle de  \eqref{eq-avecG} ainsi que des propositions~\ref{pr-gimp} et~\ref{pr-gimp2}.
\end{proof}

En combinant la proposition~\ref{pr-inv1} et les lemmes~\ref{lm-prim} et~\ref{lm-imprim}, et en tenant compte de l'égalité $\sum_{t\in (\mathbb{Z}/N)^\times}\chi(t)=0$ lorsque $\chi$ est un caractère modulo $N$ non trivial, on obtient :

\begin{pr} Supposons que $k=\zcyc[1/p]$. Soient $i\in\mathbb{N}^*\cup\{\infty\}$ et $\alpha : \mathbb{Z}/p^i\to k$ une fonction ensembliste. Alors $\Phi(\alpha)_V : k[V]\to k^{V^\sharp}$ est un isomorphisme pour tout groupe abélien fini $V$ annulé par $p^i$ si et seulement si les trois conditions suivantes sont satisfaites :
\begin{enumerate}
\item $\alpha(0)\in k^\times$ ;
\item $\sum_{i=1}^{p-1}\big(\alpha(i/p)-\alpha(0)\big)\in k^\times$ ;
\item pour tout entier $1\le r\le i$ et tout caractère primitif $\chi : (\mathbb{Z}/p^r)^\times\to k^\times$,
$$\sum_{t\in(\mathbb{Z}/p^r)^\times}\chi(t)^{-1}(\alpha(t)-\alpha(0))\in k^\times\;.$$
\end{enumerate}
\end{pr}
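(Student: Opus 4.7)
The plan is to apply Proposition~\ref{pr-inv1} and then translate the resulting family of invertibility conditions via Lemmas~\ref{lm-prim} and~\ref{lm-imprim}, iterating the imprimitive reduction until every character of $(\mathbb{Z}/p^i)^\times$ has been accounted for.

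First I would reduce the case $i=\infty$ to the family of cases where $i$ is finite. A finite abelian $p$-group lies in $\mathbf{Ab}^f_{p^n}$ for some $n$, and the restriction of $\Phi(\alpha)$ to $\mathbf{Ab}^f_{p^n}$ depends only on the restriction of $\alpha$ to $\mathbb{Z}/p^n\subset\mathbb{Z}/p^\infty$; hence $\Phi(\alpha)_V$ is an isomorphism for every $V\in\mathbf{Ab}^f_{(p)}$ iff the analogous statement holds for each finite $n$. Thus it suffices to handle $i$ finite, in which case Proposition~\ref{pr-inv1} with $N=p^i$ says that $\Phi(\alpha)$ is an isomorphism iff $\alpha(0)\in k^\times$ (which is condition~(1) of the statement) and, for every character $\chi:(\mathbb{Z}/p^i)^\times\to k^\times$, one has $\sum_{t\in(\mathbb{Z}/p^i)^\times}\chi(t)\hat\alpha(t)\in k^\times$.

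The remaining task is to show that, as $\chi$ ranges over all such characters, these conditions assemble exactly into conditions~(2) and~(3). I would split according to primitivity and iterate. If $\chi$ is primitive modulo $p^i$, Lemma~\ref{lm-prim} converts the condition into $\sum_{u\in(\mathbb{Z}/p^i)^\times}\chi(u)^{-1}\alpha(u)\in k^\times$; since $\chi^{-1}$ is itself primitive and thus nontrivial, $\sum_{u\in(\mathbb{Z}/p^i)^\times}\chi(u)^{-1}=0$, so one may freely replace $\alpha(u)$ by $\alpha(u)-\alpha(0)$ to recover condition~(3) at level $r=i$. If $\chi$ is imprimitive and $i\ge 2$, Lemma~\ref{lm-imprim}(2) replaces the condition by the same kind of condition at level $i-1$ for the induced character $\bar\chi$ and induced function $\bar\alpha$; iterating, every imprimitive $\chi$ descends in finitely many steps to either a primitive character at some intermediate level $r$ (yielding condition~(3) at that level by the primitive case already handled) or to the trivial character modulo $p$, to which Lemma~\ref{lm-imprim}(1) gives exactly condition~(2).

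Conversely, each primitive character modulo $p^r$ for $1\le r\le i$, together with the trivial character modulo $p$, is the pullback through the natural surjection $(\mathbb{Z}/p^i)^\times\twoheadrightarrow(\mathbb{Z}/p^r)^\times$ of a unique character modulo $p^i$, and this correspondence is compatible with the iterated reduction of Lemma~\ref{lm-imprim}(2); hence conditions~(2) and~(3) together are equivalent to the full family of invertibility conditions for all $\chi$, and with condition~(1) they characterize $\Phi(\alpha)$ being an isomorphism. I do not anticipate a serious obstacle: the proof is a careful bookkeeping exercise combining the three previously proved results. The only subtle point is the systematic use of the identity $\sum_{t\in(\mathbb{Z}/p^r)^\times}\chi(t)=0$ for nontrivial $\chi$, which lets one pass between $\alpha(u)$ and $\alpha(u)-\alpha(0)$ in the primitive case and thus align the output of Lemma~\ref{lm-prim} with the exact form of condition~(3).
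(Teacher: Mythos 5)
Your argument is correct and takes essentially the same route as the paper, which obtains this proposition precisely by combining Proposition~\ref{pr-inv1} with Lemmas~\ref{lm-prim} and~\ref{lm-imprim} and the identity $\sum_{t\in(\mathbb{Z}/N)^\times}\chi(t)=0$ for $\chi$ nontrivial. The paper leaves the bookkeeping implicit (the reduction of $i=\infty$ to finite levels and the bijection between characters modulo $p^i$ and the primitive characters at each level $r$ together with the trivial character), and you have supplied it correctly.
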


(Les notations $\Phi$ et $A$ apparaissant dans les énoncés précédent et suivant sont celles du corollaire~\ref{cor-Phitr}.)

\begin{cor}\label{cor-tpzc} Supposons que $k=\zcyc[1/p]$. Soit $\alpha : \mathbb{Z}/p^\infty\to k$ la fonction envoyant $1/p^r$ sur $2$ pour tout $r\in\mathbb{N}^*$ et tous les autres éléments sur $1$.

Alors $\Phi(\alpha) : k[A]\to k^{A^\sharp}$ est un isomorphisme.
\end{cor}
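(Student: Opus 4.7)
The plan is to invoke the preceding proposition with $i=\infty$. Indeed, the finite abelian groups annulated by ``$p^\infty$'' are exactly the finite abelian $p$-groups, i.e.\ the objects of $\mathbf{Ab}^f_{(p)}$, so the three conditions of that proposition characterise exactly when $\Phi(\alpha)$ is an isomorphism in $\F(\mathbf{Ab}^f_{(p)};k)$. It therefore suffices to verify these three conditions for the specific function $\alpha$ of the statement.

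Condition~(1) is immediate since $\alpha(0)=1\in k^\times$. For condition~(2), I would observe that among the elements $i/p$ with $1\le i\le p-1$ of $\mathbb{Z}/p^\infty$, only $i=1$ gives an element of the distinguished set $\{1/p^r\mid r\in\mathbb{N}^*\}$: indeed $i/p=1/p^r$ in $\mathbb{Z}/p^\infty$ forces $r=1$ (otherwise the orders differ) and then $i=1$. Hence $\alpha(i/p)=\alpha(0)=1$ for $2\le i\le p-1$, while $\alpha(1/p)-\alpha(0)=2-1=1$, so the sum is $1\in k^\times$.

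For condition~(3), fix $r\ge 1$ and a primitive character $\chi:(\mathbb{Z}/p^r)^\times\to k^\times$. The key point is to identify which $t\in(\mathbb{Z}/p^r)^\times$, viewed in $\mathbb{Z}/p^\infty$ via the canonical inclusion $\bar t\mapsto t/p^r$, satisfy $\alpha(t)\ne\alpha(0)$. An equality $t/p^r=1/p^s$ in $\mathbb{Z}/p^\infty$ (with $s\in\mathbb{N}^*$) amounts, after clearing denominators, to $t\equiv p^{r-s}\pmod{p^r}$; since $\gcd(t,p)=1$, this forces $s=r$ and therefore $t\equiv 1\pmod{p^r}$. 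Consequently $\alpha(t)-\alpha(0)=1$ for $t=1$ and $\alpha(t)-\alpha(0)=0$ otherwise, so the full sum collapses to $\chi(1)^{-1}\cdot 1=1\in k^\times$, independently of $\chi$ and of $r$.

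All three conditions being verified, the proposition yields that $\Phi(\alpha):k[A]\to k^{A^\sharp}$ is an isomorphism. There is essentially no obstacle here: the whole argument is a direct computation, and the only delicate point is to describe carefully, inside $\mathbb{Z}/p^\infty$, the image of $(\mathbb{Z}/p^r)^\times$ under the natural embedding and its intersection with $\{1/p^s\mid s\ge 1\}$, which is what makes the sums collapse to $1$.
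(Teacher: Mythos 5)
Your proof is correct and is exactly the verification the paper leaves implicit: the corollary is stated without proof immediately after the proposition, precisely because it reduces to checking the three conditions, and your identification of the image of $(\mathbb{Z}/p^r)^\times$ in $\mathbb{Z}/p^\infty$ (via $t\mapsto t/p^r$, so that $\alpha(t)\ne\alpha(0)$ only for $t=1$) is the right computation, making every sum equal to $1\in k^\times$. Nothing to add.
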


\begin{proof}[Démonstration du théorème~\ref{thprinc}] L'anneau $\zcyc[1/p]$ est un module libre (dont une base est formée par les racines de l'unité) sur $\mathbb{Z}[1/p]$. Par conséquent, le corollaire~\ref{cor-tpzc} et le lemme~\ref{lm-cab} montrent que le foncteur d'inclusion $A : \mathbf{Ab}_{(p)}^f\to\mathbf{Ab}$ vérifie la condition $(\mathrm{LD})_{\mathbb{Z}[1/p]}$. Le lemme~\ref{lm-cauni} achève alors la démonstration.
\end{proof}

\subsection{Quelques résultats pour des foncteurs additifs à valeurs infinies}\label{ssect-vi}

\subsubsection{Exemple d'isomorphisme $k[A]\simeq k^B$ où $A$ et $B$ prennent des valeurs infinies}

Pour des foncteurs additifs $A$ et $B$, on peut avoir $k[A]\simeq k^B$ sans que $A$ soit à valeurs finies, comme l'illustre la proposition~\ref{pr-gros_dual} ci-après. Pour l'établir, nous aurons besoin du théorème~\ref{th-Ku} ci-dessous, qui découle des résultats principaux de Kuhn \cite{Ku-adv}.

\begin{nota} Si $\FF$ est un corps fini, on note $\FF-\mathrm{ev}$ la catégorie des $\FF$-espaces vectoriels de dimension finie et l'on abrège $\F(\FF-\mathrm{ev};k)$ en $\F(\FF,k)$.
\end{nota}

\begin{thm}[Kuhn]\label{th-Ku} Soit $\FF$ un corps fini de cardinal $q$ inversible dans $k$. Pour un foncteur $F$ de $\F(\FF,k)$ et un $\FF$-espace vectoriel fini $V$, on définit des représentations $k$-linéaires $F^!(V)$ et $F_!(V)$ du groupe linéaire $\mathrm{Aut}_\FF(V)$ par
$$F^!(V)=\mathrm{Ker}\Big(F(V)\to\bigoplus_D F(V/D)\Big)\quad\text{et}\quad F_!(V)=\mathrm{Coker}\Big(\bigoplus_H F(H)\to F(V)\Big)$$
où $D$ (resp. $H$) parcourt l'ensemble des droites (resp. hyperplans) de $V$. Alors :
\begin{enumerate}
\item les représentations $F^!(V)$ et $F_!(V)$ sont naturellement isomorphes ;
\item si $F$ et $G$ sont des foncteurs de $\F(\FF,k)$ tels que les représentations $F^!(\FF^n)$ et $G^!(\FF^n)$ de $\GL_n(\FF)$ soient isomorphes pour tout $n\in\mathbb{N}$, alors $F$ et $G$ sont isomorphes.
\end{enumerate}
\end{thm}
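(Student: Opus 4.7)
The plan is to deduce both assertions from Kuhn's structural results in \cite{Ku-adv}. Under the hypothesis that $q := |\FF|$ is invertible in $k$, Kuhn establishes on each $F\in\F(\FF,k)$ a canonical, functorial polynomial filtration $0 = F^{(-1)}\subseteq F^{(0)}\subseteq F^{(1)}\subseteq\cdots$ whose $n$-th graded piece $\mathrm{gr}_n F$ is a ``degree-exactly-$n$'' functor, together with an equivalence of categories
\[ \F(\FF,k)\;\xrightarrow{\;\simeq\;}\;\prod_{n\ge 0}\, k[\GL_n(\FF)]\Md \]
which, informally, sends $F$ to the sequence of its degree-$n$ components evaluated at $\FF^n$.

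The first step is to identify both $F^!(\FF^n)$ and $F_!(\FF^n)$ with the $\GL_n(\FF)$-module $\mathrm{gr}_n F(\FF^n)$ produced by this equivalence. For $F_!$, I would argue that the inclusions of hyperplanes $H\hookrightarrow V$ collectively span, by functoriality, exactly the image of $F^{(n-1)}(V)$ inside $F(V)$, so that $F_!(V)\simeq F(V)/F^{(n-1)}(V)\simeq\mathrm{gr}_n F(V)$ when $\dim V = n$. Dually, $F^!(V)$ is killed by every projection to an $(n{-}1)$-dimensional quotient, so it identifies with the same degree-$n$ component sitting inside $F(V)$ via the canonical splitting of the polynomial filtration. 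Both identifications are natural in $V$, hence equivariant for $\GL_n(\FF)=\mathrm{Aut}_\FF(V)$.

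Assertion~(1) then becomes the statement that the natural composite $F^!(V)\hookrightarrow F(V)\twoheadrightarrow F_!(V)$ is the canonical isomorphism between the two incarnations of $\mathrm{gr}_n F(V)$; this can be read off from the splitting of the polynomial filtration, which is available precisely because $q$ is invertible in $k$ (via averaging against $\GL_n(\FF)$-invariant idempotents). Assertion~(2) is then an immediate consequence of the equivalence: if $F^!(\FF^n)\simeq G^!(\FF^n)$ as $\GL_n(\FF)$-modules for every $n\ge 0$, then $F$ and $G$ have isomorphic image under Kuhn's equivalence, hence $F\simeq G$ in $\F(\FF,k)$.

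The main, and only serious, obstacle is the careful matching of the hyperplane/line constructions defining $F_!$ and $F^!$ with Kuhn's abstract cross-effect/polynomial filtration formalism, together with a verification that the identifications are $\GL_n(\FF)$-equivariant. Once this translation is carried out, both (1) and (2) follow formally; the rest of the argument is essentially bookkeeping around Kuhn's equivalence.
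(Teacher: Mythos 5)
The paper gives no proof of this statement: it is quoted as Kuhn's theorem and justified only by the remark that it ``découle des résultats principaux de \cite{Ku-adv}''. Your overall strategy --- read both assertions off Kuhn's equivalence $\F(\FF,k)\simeq\prod_{n\ge 0}k[\GL_n(\FF)]\Md$ in nondescribing characteristic, identifying $F^!(\FF^n)$ and $F_!(\FF^n)$ with the $n$-th component --- is therefore exactly the intended provenance, and your deduction of (2) from (1) plus the equivalence is correct. One substantive caveat: the filtration you invoke should not be the \emph{polynomial} (cross-effect) filtration. For the functors to which the theorem is actually applied in this paper, namely $P^E=k[\Hom(E,-)]$, the polynomial filtration is infinite and its graded pieces do not compute $F_!$ (for instance $k[\Hom(\FF,-)]=k[-]$ has $F_!(V)=0$ for $\dim V\ge 2$, while its polynomial filtration never stabilizes). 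What Kuhn uses is the filtration by ``generation degree'' (images of the maps induced by inclusions of proper subspaces), whose splitting when $q\in k^\times$ comes from explicit idempotents in the monoid rings $k[\End_\FF(\FF^n)]$ with $e_n k[\End_\FF(\FF^n)]e_n\simeq k[\GL_n(\FF)]$; it is this idempotent decomposition that identifies $F^!(\FF^n)\simeq e_n F(\FF^n)\simeq F_!(\FF^n)$ --- which is assertion (1), and is the genuinely nontrivial input rather than something that can be ``read off a splitting''. With that correction, your sketch matches the source.
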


\begin{pr}\label{pr-gros_dual} Soient $\FF$ un corps fini et $E$ un $\FF$-espace vectoriel (de dimension arbitraire). On suppose que $k$ est un corps  et que $|\FF|\in k^\times$. Alors les foncteurs $P^E:=k[\Hom(E,-)]$ et $I^E:=k^{\Hom(-,E)}$ de $\F(\FF,k)$ sont isomorphes.
\end{pr}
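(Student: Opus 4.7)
The plan is to invoke Théorème~\ref{th-Ku}. By its second part, it suffices to exhibit, for each $n \in \mathbb{N}$, a $k$-linear $\GL_n(\FF)$-equivariant isomorphism $(P^E)^!(\FF^n) \simeq (I^E)^!(\FF^n)$. Unpacking the kernel definition of $(-)^!$, an element $f \in I^E(V) = k^{\Hom(V,E)}$ is annihilated in every $I^E(V/D)$ iff $f(\phi \circ \pi_D) = 0$ for every line $D \subseteq V$ and every $\phi : V/D \to E$, which is equivalent to $f$ vanishing on every non-injective morphism $V \to E$. Hence $(I^E)^!(V) \simeq k^{\mathrm{Inj}(V,E)}$, the $k$-module of functions on the set of injective $\FF$-linear maps $V \hookrightarrow E$, with $\GL(V)$ acting by precomposition.

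For $(P^E)^!$, I would invoke the natural isomorphism $(P^E)^! \simeq (P^E)_!$ from the first part of Théorème~\ref{th-Ku} and work with the cokernel presentation instead. The image of $\bigoplus_H k[\Hom(E,H)] \to k[\Hom(E,V)]$ is spanned by those classes $[\phi]$ for which $\phi : E \to V$ factors through some hyperplane, i.e.\ fails to be surjective; consequently $(P^E)_!(V) \simeq k[\surj(E,V)]$, the free $k$-module on surjections $E \twoheadrightarrow V$, with $\GL(V)$ acting by postcomposition.

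The crux, and main obstacle, is then to produce a $\GL(V)$-equivariant isomorphism $k[\surj(E,V)] \simeq k^{\mathrm{Inj}(V,E)}$. On both sides $\GL(V)$ acts freely, with orbits on the $\surj$-side indexed by $\mathrm{Ker}(\phi) \in \mathrm{Gr}^{\dim V}(E)$ (the codimension-$\dim V$ subspaces of $E$) and orbits on the $\mathrm{Inj}$-side indexed by $\mathrm{Im}(\sigma) \in \mathrm{Gr}_{\dim V}(E)$ (the $\dim V$-dimensional subspaces of $E$). Each single orbit contributes a copy of the left regular representation, realised as $k[\GL(V)]$ on the $\surj$-side and as $k^{\GL(V)}$ on the $\mathrm{Inj}$-side; since $\GL(V)$ is finite, $[g] \leftrightarrow \delta_g$ furnishes a canonical isomorphism $k[\GL(V)] \simeq k^{\GL(V)}$ of left $k[\GL(V)]$-modules. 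The problem thereby reduces to identifying $\bigoplus_{\mathrm{Gr}^{\dim V}(E)} k[\GL(V)]$ with $\prod_{\mathrm{Gr}_{\dim V}(E)} k[\GL(V)]$ as $\GL(V)$-representations: for $E$ finite-dimensional the two Grassmannians are finite and equinumerous by Gaussian-binomial symmetry, so the direct sum agrees with the direct product and the reduction comes down to a bijection of $\GL(V)$-sets. The hard part is the infinite-dimensional case, where the index cardinals $|\mathrm{Gr}^{\dim V}(E)|$ (of order $|E^*|$) and $|\mathrm{Gr}_{\dim V}(E)|$ (of order $|E|$) genuinely differ, and the required isomorphism must be extracted from a careful cardinality computation using the Erdős–Kaplansky theorem for the dimension of the direct product side.
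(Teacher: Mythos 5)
Your reductions are exactly those of the paper: via le théorème de Kuhn you compute $(P^E)_!(V)\simeq k[\surj(E,V)]$ and $(I^E)^!(V)\simeq k^{\mathrm{Inj}(V,E)}$, observe that both underlying $\mathrm{Aut}_\FF(V)$-sets are free, and reduce to comparing a direct \emph{sum} of copies of $k[\mathrm{Aut}_\FF(V)]$ indexed by the orbit set of $\surj(E,V)$ (of cardinality $2^{\mathfrak c}$, where $\mathfrak c=\dim E$ is infinite) with a direct \emph{product} of copies of $k[\mathrm{Aut}_\FF(V)]$ indexed by the orbit set of $\mathrm{Inj}(V,E)$ (of cardinality $\mathfrak c$). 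But at precisely the point you yourself call the hard part, you stop: asserting that the isomorphism ``must be extracted from a careful cardinality computation using Erdős--Kaplansky'' is not a proof. Equality of $k$-dimensions does not yield an isomorphism of $k[G]$-modules (the regular representation and a trivial representation of the same dimension are not isomorphic); what actually has to be shown is that the product $\prod_{\mathfrak c} k[\mathrm{Aut}_\FF(V)]$ is a \emph{free} $k[\mathrm{Aut}_\FF(V)]$-module of rank $2^{\mathfrak c}$, and no dimension count alone establishes freeness, since an infinite product of copies of a ring is in general far from free over it.

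The paper closes exactly this gap with a structural argument: the product of copies of $k[\mathrm{Aut}_\FF(V)]$ is injective (a product of injectives over the self-injective group algebra of a finite group), hence projective, hence a direct sum of principal indecomposable modules; the multiplicity of each simple in its socle is then computed to be $2^{\mathfrak c}$, which forces the module to be free of rank $2^{\mathfrak c}$ and therefore isomorphic to $(P^E)_!(V)$. Cardinality arguments of the Erdős--Kaplansky type do enter, but only to evaluate these socle multiplicities, \emph{after} the chain ``injective $\Rightarrow$ projective $\Rightarrow$ direct sum of indecomposable projectives'' has been secured. Without some such representation-theoretic input, your argument is incomplete at its decisive step.
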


\begin{proof} Il suffit de traiter le cas où la dimension $\mathfrak{c}$ de $E$ est infinie (le cas où $E$ est de dimension finie étant un cas particulier facile du théorème~\ref{thprinc}) ; elle coïncide alors avec le cardinal de $E$, puisque $\FF$ est fini.

Soit $V$ un $\FF$-espace vectoriel fini. On a clairement, avec les notations du théorème~\ref{th-Ku}, $(P^E)_!(V)\simeq k[\surj(E,V)]$, où $\surj(E,V)\subset\Hom(E,V)$ désigne le sous-$\mathrm{Aut}_\FF(V)$-ensemble des applications surjectives. Comme $\surj(E,V)$ est un $\mathrm{Aut}_\FF(V)$-ensemble libre, $(P^E)_!(V)$ est un $k[\mathrm{Aut}_\FF(V)]$-module libre, dont le rang vaut $1$ si $V$ est nul et $2^\mathfrak{c}$ si $V$ est de dimension finie non nulle.

De même, $(I^E)^!(V)\simeq k^{\mathrm{Inj}(V,E)}$, où $\mathrm{Inj}(V,E)\subset\Hom(V,E)$ désigne le sous-$\mathrm{Aut}_\FF(V)$-ensemble des applications injectives, qui est libre. Il s'ensuit que, si $V$ est $\FF$-espace vectoriel de dimension finie non nulle, le $k[\mathrm{Aut}_\FF(V)]$-module  $(I^E)^!(V)$ est isomorphe au produit de $\mathfrak{c}$ copies de $k[\mathrm{Aut}_\FF(V)]$. Un tel module est injectif (comme produit d'injectifs), donc projectif, puisque $\mathrm{Aut}_\FF(V)$ est un groupe fini. Les différents $k[\mathrm{Aut}_\FF(V)]$-modules simples apparaissent chacun avec la multiplicité $2^\mathfrak{c}$ dans son socle, ce qui permet d'en déduire que c'est un module libre de rang $2^\mathfrak{c}$.

On conclut en appliquant le théorème~\ref{th-Ku}.
\end{proof}

\subsubsection{Restrictions liées à la torsion imposées par un isomorphisme $k[A]\simeq k^B$}

Si $k[A]\simeq k^B$, où $A : \A\to\mathbf{Ab}$ et $B : \A^\op\to\mathbf{Ab}$ sont des foncteurs additifs, et que $k$ est de caractéristique nulle, la proposition~\ref{pr-ckt} montre que $A$ est à valeurs dans les groupes abéliens de torsion. Nous allons voir que cette conclusion vaut en fait pour tout anneau $k$ non nul, et que la torsion des valeurs est bornée sur chaque évaluation des foncteurs $A$ et $B$ (proposition~\ref{pr-expbor} ci-après).

Pour le démontrer, nous allons utiliser l'action du monoïde $\mathbb{Z}_\mu$ (ou plus précisément du sous-monoïde des entiers non nuls) sur tout foncteur de $\F(\A;k)$ (qui constitue l'un des principaux outils de \cite{DT-poids} ; nous n'utiliserons ici que des considérations directes et élémentaires). 

\begin{nota} On désigne par $\mathbb{Z}^*$ le monoïde multiplicatif des entiers relatifs non nuls et par $k_\tr$ le $k$-module $k$ muni de l'action triviale de $\mathbb{Z}^*$. 
\end{nota}

\begin{nota} Soient $V$ un groupe abélien et $n\in\mathbb{N}^*$. On note $V_\tor$ le sous-groupe de torsion de $V$ et $_n V$ le sous-groupe des éléments annulés par $n$.
\end{nota}

\begin{lm}\label{lm-torisol} Soient $A : \A\to\mathbf{Ab}$ et $B : \A^\op\to\mathbf{Ab}$ des foncteurs additifs. Tout isomorphisme $k[A]\simeq k^B$ induit des isomorphismes :
\begin{enumerate}
\item\label{it1-tfin} $k[A/n]\simeq k^{\,_n B}$ pour tout $n\in\mathbb{N}^*$ ;
\item\label{it2-tfin} $k[_n A]\simeq k^{B/n}$ pour tout $n\in\mathbb{N}^*$ ;
\item\label{it3-tfin} $k[A_\tor]\simeq\underset{n\in\mathbb{N}^*}{\col} k^{B/n}$ (la colimite étant relative à la divisibilité sur $\mathbb{N}^*$);
\item\label{it4-tfin} $\underset{n\in\mathbb{N}^*}{\lim}k[A/n]\simeq k^{B_\tor}$.
\end{enumerate}
\end{lm}

\begin{proof} Pour $n\in\mathbb{N}^*$, notons $\Phi_n$ (resp. $\Phi^n$) l'endofoncteur de $\F(\A;k)$, muni d'une transformation naturelle canonique $\mathrm{Id}\twoheadrightarrow\Phi_n$ (resp. $\Phi^n\hookrightarrow\mathrm{Id}$), défini par
$$\big(\Phi_n(F)\big)(t):=\mathrm{Coker}\big(F(t\oplus t)\xrightarrow{F(1\quad n)-F(1\quad 0)}F(t)\big)\quad\text{(resp.}$$
$$\big(\Phi^n(F)\big)(t):=\mathrm{Ker}\big(F(t)\xrightarrow{F\left(\begin{array}{c} 1 \\
                                                n 
                                               \end{array}\right)-F\left(\begin{array}{c} 1 \\
                                                0 
                                               \end{array}\right)}F(t\oplus t)\big)\quad\text{).}$$

Un calcul facile fournit des isomorphismes canoniques $\Phi_n(k[A])\simeq k[A/n]$ et $\Phi^n(k[A])\simeq k[_n A]$ (et les morphismes naturels vers ou depuis $k[A]$ sont induits par la projection $A\twoheadrightarrow A/n$ et l'injection $_n A\hookrightarrow A$), et, dualement,  $\Phi_n(k^B)\simeq k^{\,_n B}$ et $\Phi^n(k^B)\simeq k^{B/n}$ (idem). Ainsi, l'assertion \ref{it1-tfin} (resp. \ref{it2-tfin}) s'obtient en appliquant $\Phi_n$ (resp. $\Phi^n$) à l'isomorphisme $k[A]\simeq k^B$, tandis que \ref{it3-tfin} (resp. \ref{it4-tfin}) se déduit de \ref{it2-tfin} (resp. \ref{it1-tfin}) et de la commutation de $k[-] : \mathbf{Ab}\to k\Md$ (resp. $k^{-} : \mathbf{Ab}^\op\to k\Md$) aux colimites (resp. limites) filtrantes.
\end{proof}

\begin{rem} La démonstration précédente utilise dans un cas particulier simple la notion de \textit{radical} d'un foncteur introduite dans \cite[§\,6.1]{DT-poids}.
\end{rem}


\begin{lm}\label{lm-tor_Zmu1} Soit $V$ un groupe abélien. L'inclusion $V_\tor\hookrightarrow V$ induit un isomorphisme $\Hom_{k[\mathbb{Z^*}]}(k_\tr,k[V_\tor])\xrightarrow{\simeq}\Hom_{k[\mathbb{Z}^*]}(k_\tr,k[V])$.
\end{lm}

\begin{proof} Soit $\sum_{i=1}^r\lambda_i[v_i]$, où les $\lambda_i$ sont des éléments non nuls de $k$ et les $v_i$ des éléments deux à deux distincts de $V$, un élément de $k[V]$ invariant par l'action de $\mathbb{Z}^*$ : pour tout $n\in\mathbb{Z}^*$, on a $\sum_{i=1}^r\lambda_i[v_i]=\sum_{i=1}^r\lambda_i[nv_i]$, ce qui entraîne que la multiplication par $n$ induit une permutation de l'ensemble $\{v_1,\dots,v_r\}$. En particulier, pour tout $i$, l'ensemble $\mathbb{Z}^*.v_i$ est inclus dans $\{v_1,\dots,v_r\}$, donc fini, ce qui implique que $v_i$ est de torsion et termine la démonstration.
\end{proof}

\begin{lm}\label{lm-tor_Zmu2} Soit $U$ un groupe abélien. Si l'inclusion $\underset{n\in\mathbb{N}^*}{\col} k^{U/n}\hookrightarrow k^U$ (la colimite étant relative à la divisibilité sur $\mathbb{N}^*$) induit un isomorphisme\linebreak[4] $\Hom_{k[\mathbb{Z}^*]}(k_\tr,\underset{n\in\mathbb{N}^*}{\col} k^{U/n})\xrightarrow{\simeq}\Hom_{k[\mathbb{Z}^*]}(k_\tr,k^U)$, alors $U$ est un groupe abélien de torsion.
\end{lm}

\begin{proof} Considérons la fonction $\alpha : U\to k$ associant $0$ aux éléments de torsion de $U$ et $1$ aux autres éléments. Pour $n\in\mathbb{Z}^*$ et $u\in U$, on a : $(n.u\in U_\tor)\Leftrightarrow (u\in U_\tor)$, de sorte que $\alpha$ définit une application $k[\mathbb{Z}^*]$-linéaire $k_\tr\to k^U$. Si cette application appartient à l'image du monomorphisme canonique $\Hom_{k[\mathbb{Z}^*]}(k_\tr,\underset{n\in\mathbb{N}^*}{\col} k^{U/n})\to\Hom_{k[\mathbb{Z}^*]}(k_\tr,k^U)$, alors il existe $n\in\mathbb{N}^*$ tel que $\alpha$ se factorise par la réduction modulo $n$. En particulier, $\alpha$ envoie tout élément de $n.U$ sur $\alpha(0)=0$, d'où $n.U\subset U_\tor$, ce qui n'est possible que si $U$ est un groupe de torsion.
\end{proof}

\begin{lm}\label{lm-limlin} Soient $V$ un groupe abélien et $(U_n)_{n\in\mathbb{N}}$ une suite décroissante de sous-groupes de $V$. Si le morphisme canonique $k[V]\to\underset{n\in\mathbb{N}}{\lim}\,k[V/U_n]$ est un isomorphisme, alors il existe $n\in\mathbb{N}$ tel que $U_n=0$.
\end{lm}

\begin{proof} L'injectivité du morphisme canonique implique clairement la nullité de $\underset{n\in\mathbb{N}}{\bigcap}U_n$. Si tous les $U_n$ sont non nuls, quitte à extraire une sous-suite, on peut supposer que chaque inclusion $U_{n+1}\subset U_n$ est stricte. Choisissons donc $u_n\in U_n\setminus U_{n+1}$. La suite $(\sum_{i=0}^{n-1}([u_i\mod U_n]-[0]))_{n\in\mathbb{N}}$ de $\prod_{n\in\mathbb{N}}k[V/U_n]$ définit un élément de $\underset{n\in\mathbb{N}}{\lim}\,k[V/U_n]$. Pour montrer qu'il ne peut pas appartenir à l'image du morphisme canonique, introduisons une notation : pour un groupe abélien $T$ et un élément $\xi$ de $k[T]$, notons $\lgr(\xi)$ le plus petit $n\in\mathbb{N}$ tel qu'il existe des éléments $t_i$ de $V$ et $\lambda_i$ de $k$, pour $1\le i\le n$, vérifiant $\xi=\sum_{i=1}^n\lambda_i[t_i]$. Si les $\xi_n$ (pour $n\in\mathbb{N}$) sont des éléments de $k[V/U_n]$ tels que $(\xi_n)$ appartienne à l'image du morphisme canonique $k[V]\to\underset{n\in\mathbb{N}}{\lim}\,k[V/U_n]\hookrightarrow\prod_{n\in\mathbb{N}}k[V/U_n]$, alors la suite $(\lgr(\xi_n))_{n\in\mathbb{N}}$ est bornée.

Or $\lgr\big(\sum_{i=0}^{n-1}([u_i\mod U_n]-[0])\big)\ge n$, car les classes dans $V/U_n$ de $0$ et des $u_i$, pour $0\le i\le n-1$, sont deux à deux distinctes (si $j<i<n$, alors $u_i-u_j\notin U_i$ car $u_i\in U_i$ et $u_j\notin U_i$, en particulier $u_i-u_j\notin U_n$). Cela termine la démonstration.
\end{proof}

\begin{pr}\label{pr-expbor} Soient $A : \A\to\mathbf{Ab}$ et $B : \A^\op\to\mathbf{Ab}$ des foncteurs additifs tels que $k[A]\simeq k^B$ dans $\F(\A;k)$. Alors pour tout objet $x$ de $\A$, il existe $n\in\mathbb{N}^*$ qui annule les groupes abéliens $A(x)$ et $B(x)$.
\end{pr}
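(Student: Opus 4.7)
\emph{Plan.} Fix an object $x$ of $\A$, set $V := A(x)$ and $U := B(x)$, and let $\phi : k[V]\xrightarrow{\sim} k^U$ be the evaluation at $x$ of the given isomorphism $k[A]\simeq k^B$. The key preliminary observation is that $\phi$ is $k[\mathbb{Z}^*]$-linear: since $\A$ is additive, multiplication by $n$ on $V$ (resp. $U$) is $A(n\cdot\mathrm{id}_x)$ (resp. $B(n\cdot\mathrm{id}_x)$), so the naturality of $\phi$ immediately yields the equivariance. Moreover, because the endofunctors $\Phi_n$ and $\Phi^n$ used to prove Lemma~\ref{lm-torisol} are functorial on $\F(\A;k)$, each of the isomorphisms produced there fits, after evaluation at $x$, into a commutative square with $\phi$ whose vertical arrows are the canonical maps (the inclusion $V_\tor\hookrightarrow V$, the projection $V\twoheadrightarrow V/nV$, the restriction $k^U\to k^{{}_nU}$, etc.) and in which every arrow is $k[\mathbb{Z}^*]$-linear.

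The proof then proceeds in two steps. To see that $U$ is torsion, apply $\Hom_{k[\mathbb{Z}^*]}(k_\tr, -)$ to the square arising from Lemma~\ref{lm-torisol}.\ref{it3-tfin} evaluated at $x$ (whose rows are $k[V_\tor]\xrightarrow{\sim}\underset{n}{\col}\,k^{U/n}$ and $\phi$, and whose verticals are the canonical inclusions): the left vertical becomes an isomorphism by Lemma~\ref{lm-tor_Zmu1}, hence so does the right, and Lemma~\ref{lm-tor_Zmu2} forces $U$ to be torsion. To find a common exponent annihilating both $V$ and $U$, evaluate Lemma~\ref{lm-torisol}.\ref{it4-tfin} at $x$ to obtain a commutative square with rows $\phi$ and $\underset{n}{\lim}\,k[V/nV]\xrightarrow{\sim}k^{U_\tor}$ whose right vertical is the restriction $k^U\to k^{U_\tor}$; since $U=U_\tor$ by the previous step, this restriction is an isomorphism, hence so is the left vertical, namely the canonical map $k[V]\to\underset{n}{\lim}\,k[V/nV]$. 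Passing to the cofinal subsystem $(n!V)_{n\in\mathbb{N}}$ and invoking Lemma~\ref{lm-limlin} yields $N\in\mathbb{N}^*$ with $NV=0$. For such $N$, the projection $V\twoheadrightarrow V/NV$ is the identity, so in the square attached to Lemma~\ref{lm-torisol}.\ref{it1-tfin} with $n=N$ the left vertical is the identity of $k[V]$; this forces the right vertical---the restriction $k^U\to k^{{}_NU}$---to be an isomorphism, which is possible only when ${}_NU=U$, i.e.\ $NU=0$. This same $N$ therefore annihilates both $A(x)$ and $B(x)$.

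The hard part is not any single step but the preliminary bookkeeping: recognising the $k[\mathbb{Z}^*]$-equivariance of $\phi$ and of each isomorphism of Lemma~\ref{lm-torisol}, and identifying precisely which canonical map plays the role of each vertical arrow in the three commutative squares. Once this is done, the three structural lemmas~\ref{lm-tor_Zmu1}, \ref{lm-tor_Zmu2} and~\ref{lm-limlin} apply without further computation.
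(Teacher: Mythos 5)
Your proof is correct and follows essentially the same route as the paper: first deduce that $B(x)$ is torsion from Lemmas~\ref{lm-tor_Zmu1}, \ref{lm-tor_Zmu2} and \ref{lm-torisol}.\ref{it3-tfin}, then apply Lemma~\ref{lm-torisol}.\ref{it4-tfin} together with Lemma~\ref{lm-limlin} (via the cofinal system $(m!)$) to bound the exponent of $A(x)$, and finally transfer the bound to $B(x)$ via Lemma~\ref{lm-torisol}.\ref{it1-tfin}. Your explicit bookkeeping of the $k[\mathbb{Z}^*]$-equivariance and of the commutative squares only makes precise what the paper leaves implicit.
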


\begin{proof} Les lemmes~\ref{lm-tor_Zmu1}, \ref{lm-tor_Zmu2} et \ref{lm-torisol}.\,\ref{it3-tfin} montrent tout d'abord que $B$ est à valeurs dans les groupes abéliens de torsion.

Le lemme~\ref{lm-torisol}.\,\ref{it4-tfin} montre alors que le morphisme canonique
$$k[A]\to\underset{n\in\mathbb{N}^*}{\lim}k[A/n]\simeq\underset{m\in\mathbb{N}}{\lim}k[A/m!]$$
est un isomorphisme. Le lemme~\ref{lm-limlin} permet d'en déduire que pour tout objet $x$ de $\A$, il existe $n\in\mathbb{N}^*$ tel que $n.A(x)=0$, d'où également $n.B(x)=0$ grâce au lemme~\ref{lm-torisol}.\,\ref{it1-tfin}.
\end{proof}

\begin{rem} La condition de torsion sur le foncteur additif $B : \A^\op\to\mathbf{Ab}$ de la proposition~\ref{pr-expbor} n'est pas suffisante pour qu'il existe un foncteur additif $A : \A\to\mathbf{Ab}$ tel que $k[A]\simeq k^B$, et ce quel que soit l'anneau (non nul) $k$.

Pour le voir, prenons par exemple pour $\A$ la catégorie opposée de la catégorie des $\mathbb{Z}/p$-espaces vectoriels au plus dénombrables et pour $B$ le foncteur d'inclusion. Supposons qu'existe un foncteur additif $A : \A\to\mathbf{Ab}$ tel que $k[A]\simeq k^B$. Considérons le foncteur $\mathbb{N}\to\A^\op\quad n\mapsto (\mathbb{Z}/p)^n$, où $\mathbb{N}$ est vu comme la catégorie associée à l'ordre usuel sur les nombres naturels, chaque flèche $n\to n+1$ étant envoyée sur l'inclusion $(\mathbb{Z}/p)^n\to (\mathbb{Z}/p)^{n+1}$ des $n$ premiers facteurs. Comme $k^B$ commute aux limites filtrantes dénombrables, l'application linéaire canonique
$$k\big[A\big((\mathbb{Z}/p)^{\oplus\mathbb{N}}\big)\big]\to\underset{n\in\mathbb{N}}{\lim}\, k\big[A\big((\mathbb{Z}/p)^n\big)\big]$$
est un isomorphisme.

Par ailleurs, du fait que les morphismes canoniques $(\mathbb{Z}/p)^n\to(\mathbb{Z}/p)^{\oplus\mathbb{N}}$ sont des monomorphismes \textit{scindés} de $\A^\op$, ils induisent des épimorphismes (scindés) de groupes abéliens $A\big((\mathbb{Z}/p)^{\oplus\mathbb{N}}\big)\twoheadrightarrow A\big((\mathbb{Z}/p)^n\big)$. Le lemme~\ref{lm-limlin} permet d'en déduire que ces épimorphismes sont des isomorphismes pour $n$ assez grand, i.e. que $k^{(\mathbb{Z}/p)^{\oplus\mathbb{N}}}\twoheadrightarrow k^{(\mathbb{Z}/p)^n}$ est un isomorphisme pour $n$ assez grand, absurdité qui conclut notre exemple (qui est à mettre en regard de la proposition~\ref{pr-gros_dual}).
\end{rem}

\paragraph*{Remerciements} L'auteur témoignage sa gratitude envers Benachir El Allaoui pour lui avoir posé la question donnant lieu au résultat principal de ce texte. Il remercie égalament Antoine Touzé et Christine Vespa pour des discussions reliées à ce travail.

%

\bibliographystyle{plain}
\bibliography{bib-linearis.bib}
 
 \end{document}